\documentclass[a4paper,12pt]{article}
\usepackage[T1]{fontenc}
\usepackage[latin1]{inputenc}
\usepackage[english]{babel}
\usepackage{amsthm}
\usepackage{amsmath}
\usepackage{amssymb}
\usepackage{amsfonts}
\usepackage{enumerate}


\usepackage{array,lmodern,graphicx}

\def\Re{\mathfrak{Re}}
\def\Im{\mathfrak{Im}}
\def\R{\mathbb{R}}

\def\a{\mathfrak{a}}

\newtheorem{prop}{Proposition}[section]

\newtheorem{teo}{Theorem}[section]
\newtheorem{df}{Definition}[section]

\def\N{\Arrowvert}
\def\n{\arrowvert}

\def\e{\varepsilon}

\def\i{\infty}

\def\aa{\alpha}

\def\na{\nabla}
\newcommand{\D}{\Delta}

\title{Riesz transforms associated to Schr\"{o}dinger operators with negative potentials}
\author{Joyce ASSAAD\footnote{Universit\'e Bordeaux 1, Institut de Math\'ematiques (IMB). CNRS UMR 5251.Equipe d'Analyse et G\'eom\'etrie. 351 Cours de la Liberation 33405 Talence, France.
 Tel: (33) 05 40 00 21 71, Fax: (33) 05 40 00 26 26. e-mail adress: joyce.assaad@math.u-bordeaux1.fr}}
\date{}

\begin{document}
\maketitle

\begin{abstract}
The goal of this paper is to study the Riesz transforms $\na A^{-1/2}$ where $A$ is the Schr\"{o}dinger operator $-\D-V,\ \ V\ge 0$, under different conditions on the potential $V$. We prove that if $V$ is strongly subcritical, $\na A^{-1/2}$ is bounded on $L^p(\R^N)$ , $N\ge3$, for all $p\in(p_0';2]$ where $p_0'$ is the dual exponent of  $p_0$ where $2<\frac{2N}{N-2}<p_0<\i$; and we give a counterexample to the boundedness on $L^p(\R^N)$ for $p\in(1;p'_0)\cup(p_{0*};\i)$ where $p_{0*}:=\frac{p_0N}{N+p_0}$ is the reverse Sobolev exponent of $p_0$. 
If the potential is strongly subcritical in the Kato subclass $K_N^{\i}$, then $\na A^{-1/2}$ is bounded on $L^p(\R^N)$ for all $p\in(1;2]$, moreover if it is in $L^{N/2}(\R^N)$ then $\na A^{-1/2}$ is bounded on $L^p(\R^N)$ for all $p\in(1;N)$. We prove also boundedness of $V^{1/2}A^{-1/2}$ with the same conditions on the same spaces. Finally we study these  operators on manifolds. We prove that our results hold on a class of Riemannian manifolds.\\
\textbf{keywords:}Riesz transforms, Schr\"odinger operators,  off-diagonal estimates,  singular operators, Riemannian manifolds.\\
\textbf{Mathematics Subject Classification (2010):} 42B20 .  35J10.
\end{abstract}
\section{Introduction and definitions}
Let $A$ be a Schr\"{o}dinger operator $-\D+V$ where $-\D$ is the nonnegative Laplace operator and the potential $V:\R^N\rightarrow \R$ such that $V=V^{+}-V^{-}$ (where $V^{+}$ and $V^{-}$ are the positive and negative parts of $V$, respectively). The operator is defined via the sesquilinear form method. We define
\[\a(u,v)=\int_{\R^N}\na u(x)\na v(x)dx+\int_{\R^N} V^{+}(x)u(x)v(x)dx-\int_{\R^N} V^{-}(x)u(x)v(x)dx\]
\[D(\a)=\left\{u\in W^{1,2}(\R^N), \int_{\R^N}V^{+}(x)u^2(x)dx<\i\right\}.\]
Here we assume $V^{+}\in L^1_{loc}(\R^N)$ and $V^-$ satisfies (for all $u\in D(\a)$):
\begin{eqnarray}\label{klmn}
\int_{\R^N}&&V^-(x)u^2(x)dx \le\nonumber\\
&&\aa\left[\int_{\R^N}|\na u|^2(x)dx+\int_{\R^N}V^{+}(x)u^2(x)dx\right]+\beta\int_{\R^N}u^2(x)dx
\end{eqnarray}
where $\aa \in(0,1)$ and $\beta \in \R$. By the well-known KLMN theorem (see for example \cite{k} Chapter VI), the form $\a$ is closed (and bounded from below). Its associated operator is $A$. If in addition $\beta\le 0$, then $A$ is nonnegative.

 We can define the Riesz transforms associated to $A$ by
\[\na A^{-1/2}:=\frac{1}{\Gamma(\frac{1}{2})}\int_0^{\i}\sqrt{t}\na e^{-tA}\frac{dt}{t}.\]

 The boundedness of Riesz transforms on $L^p(\R^N)$ implies that the domain of $A^{1/2}$ is included in the Sobolev space $W^{1,p}(\R^N)$. Thus the solution of the corresponding evolution equation will be in the Sobolev space $W^{1,p}(\R^N)$ for initial data in $L^p(\R^N)$.\\

It is our aim to study the boundedness on $L^p(\R^N)$ of the Riesz transforms $\na A^{-1/2}$.
We are also interested in the boundedness of the operator $V^{1/2}A^{-1/2}$. If $\na A^{-1/2}$ and  $V^{1/2}A^{-1/2}$ are bounded on $L^p(\R^N)$, we obtain for some positive constant $C$
\[\N \na u\N_p+\N V^{1/2}u\N_p\le C \N (-\D+V)^{1/2}u\N_p.\]
By a duality argument, we obtain
\[\N (-\D+V)^{1/2}u\N_{p'}\le C(\N \na u\N_{p'}+\N V^{1/2}u\N_{p'})\]
where $p'$ is the dual exponent of $p$.\\
Riesz transforms associated to Schr\"{o}dinger operators with nonnegative potentials were studied by   Ouhabaz \cite{o}, Shen \cite{sh}, and  Auscher and Ben Ali \cite{aba}. Ouhabaz proved that Riesz transforms are bounded on $L^p(\R^N)$ for all $p\in (1;2]$, for all potential $V$ locally integrable. Shen and Auscher and Ben Ali proved that if the potential $V$ is in the reverse H\"{o}lder class $B_q$, then the Riesz transforms are bounded on $L^p(\R^N)$ for all $p\in (1,p_1)$ where $2<p_1\le \i$ depends on $q$. The result of Auscher and Ben Ali generalize that of Shen because Shen has restrictions on the dimension $N$ and on the class $B_q$. Recently, Badr and Ben Ali \cite{bba} extend the result of Auscher and Ben Ali \cite{aba} to Riemannian manifolds of homogeneous type with polynomial volume growth where Poincar\'e inequalities hold and Riesz transforms associated to the Laplace-Beltrami operator are bounded. They also prove that a smaller range is possible if the volume
  growth is not polynomial.\\

With negative potentials new difficulties appear. If we take $V\in L^{\i}(\R^N)$, and  apply the method in \cite{o} to the operator $A+\N V\N_{\i}$, we obtain boundedness of $\na (A+\N V\N_{\i})^{-1/2}$ on $L^p(\R^N)$ for all $p\in(1;2]$. This is weaker than the boundedness of $\na A^{-1/2 }$ on the same spaces. Guillarmou and Hassell \cite{gh1} studied Riesz transforms $\na (A\circ P_+)^{-1/2 }$ where $A$ is the Schr\"{o}dinger operator with negative potential and $P_+$ is the spectral projection on the positive spectrum.
They prove that, on asymptotically conic manifolds $M$ of dimension $N\ge3$, if $V$ is smooth and satisfies decay conditions, and  the Schr\"{o}dinger operator has no zero-modes nor zero-resonances, then Riesz transforms $\na (A\circ P_+)^{-1/2}$
are bounded on $L^p(M)$ for all $p\in(1, N)$. They also prove (see \cite{gh2}) that when zero-modes are present, Riesz transforms $\na (A\circ P_+)^{-1/2 }$  are bounded on $L^p(M)$ for all $p\in(\frac{N}{N-2}, \frac{N}{3})$, with bigger range possible if the zero modes have extra decay at infinity.\\

In this paper we consider only negative potentials. From now on, we denote by $A$ the Schr\"{o}dinger operator with negative potential,
 \[A:=-\D-V,\ \  V\ge 0.\]
 Our purpose is, first, to find optimal conditions on $V$ allowing the boundedness of Riesz transforms $\na A^{-1/2}$ and that of $V^{1/2}A^{-1/2}$ on $L^p(\R^N)$ second, to find  the best possible range of $p$'s. \\
 Let us take the following definition
\begin{df}
We say that the potential $V$ is strongly subcritical if for some $\e >0$, $A\ge \e V$. This means that for all $u\in W^{1,2}(\R^N)$
\[\int_{\R^N} Vu^2\le \frac{1}{1+\e}\int_{\R^N}|\na u|^2.\]
\end{df}
For more information on strongly subcritical potentials see \cite{ds} and \cite{z}.\\
With this condition, $V$ satisfies assumption (\ref{klmn}) where $\beta=0$ and $\aa=\frac{1}{1+\e}$.  Thus $A$ is well defined, nonnegative and $-A$ generates an analytic contraction semigroup $(e^{-tA})_{t\ge0}$ on $L^2(\R^N)$.\\

Since $-\D-V\ge \e V$ we have $(1+\e)(-\D-V)\ge \e(-\D)$. Therefore
\begin{eqnarray}\label{l2}
 ||\na u||_2^2\le (1+\frac{1}{\e})||A^{1/2}u||_2^2.
\end{eqnarray}
Thus, $\na A^{-1/2}$ is bounded on $L^2(\R^N)$. Conversely, it is clear that if $\na A^{-1/2}$ is bounded on $L^2(\R^N)$ then $V$ is strongly subcritical.\\

We observe also that  $-\D-V\ge \e V$ is equivalent to
\begin{eqnarray}\label{lv2}
 ||V^{1/2} u||_2^2\le \frac{1}{\e}||A^{1/2}u||_2^2.
\end{eqnarray}
 Thus, $V^{1/2} A^{-1/2}$ is bounded on $L^2(\R^N)$ if and only if $V$ is strongly subcritical.\\

So we can conclude that
\[\N \na u\N_2+\N V^{1/2}u\N_2\le C\N (-\D-V)^{1/2}u\N_2\]
if and only if $V$ is strongly subcritical.
Then by duality argument we have
\[\N \na u\N_2+\N V^{1/2}u\N_2\approx \N (-\D-V)^{1/2}u\N_2\]
if and only if $V$ is strongly subcritical.\\

To study Riesz transforms on $L^p(\R^N)$ for $1\le p\le \i$ with $p\neq 2$ we use the results on the uniform boundedness of the semigroup on $L^p(\R^N)$. Taking central potentials which are equivalent to $c/|x|^2$ as $|x|$ tends to infinity where $0<c<(\frac{N-2}{2})^2, N\ge3$, Davies and Simon  \cite{ds} proved that for all $t>0$ and all $p\in (p'_0;p_0)$,
\[\N e^{-tA}\N_{p-p}\le C   \]
 where $2<\frac{2N}{N-2}<p_0
<\i$ and $p_0'$ its dual exponent. Next Liskevich, Sobol, and Vogt \cite{LSV} proved the uniform boundedness on $L^p(\R^N)$ for all $p\in (p_0';p_0)$ where $2<\frac{2N}{N-2}<p_0=\frac{2N}{(N-2)\big(1-\sqrt{1-\frac{1}{1+\e}}\big)}$, for general strongly subcritical potentials . They also proved that the range $(p_0',p_0)$ is optimal and the semigroup  does not even act on $L^p(\R^N)$ for $p\notin(p_0',p_0)$. Under additional condition on $V$, Takeda \cite{t} used stochastic methods to prove a Gaussian estimate of the associated heat kernel. Thus the semigroup acts boundedly on $L^p(\R^N)$ for all $p\in[1,\i]$. \\

In this paper we prove that when $V$ is strongly subcritical and $N\ge3$, Riesz transforms are bounded on $L^p(\R^N)$ for all $p\in (p_0';2]$.
 We also give a counterexample to the boundedness of Riesz transforms on $L^p(\R^N)$  when $p\in(1;p'_0)\cup(p_{0*};\i)$ where $2<p_{0*}:=\frac{p_0N}{N+p_0}<p_0<\i$.
  If $V$ is strongly subcritical in the Kato subclass $K_N^\i, N\ge3$ (see Section \ref{tk}), then  $\na A^{-1/2}$ is bounded on $L^p(\R^N)$ for all $p\in(1,2]$.
  If, in addition, $V\in L^{N/2}(\R^N)$ then it is bounded on $L^p(\R^N)$ for all $p\in(1,N).$ With the same conditions, we prove  similar results for the operator $V^{1/2}A^{-1/2}$. Hence if $V$ is strongly subcritical and $V\in K_N^\i\cap L^{N/2}(\R^N), N\ge 3$, then
\begin{eqnarray}
\N \na u\N_p+\N V^{1/2}u\N_p\approx \N (-\D-V)^{1/2}u\N_p
\end{eqnarray}
for all 
$p\in (N';N)$.\\
In the last section, we extend our results to Riemannian manifolds. We denote by $-\D$ the Laplace-Beltrami operator on a complete non-compact Riemannian manifold $M$ of dimension $N\ge 3$. We prove that when $V$ is strongly subcritical on $M$, $\na (-\D-V)^{-1/2}$  and $V^{1/2}(-\D-V)^{-1/2}$ are bounded on $L^p(M)$ for all $p\in (p'_0;2]$ if $M$ is of homogeneous type and the Sobolev inequality holds on $M$.
 If in addidtion  Poincar\'e inequalities hold on $M$ and $V$ belongs to the Kato class $K_\i$ then $\na (-\D-V)^{-1/2}$ and $V^{1/2}(-\D-V)^{-1/2}$ are bounded on $L^p(M)$ for all $p\in (1;2]$. When  
    $V$ is in addition in $L^{N/2}(M)$ and the Riesz transforms associated to the Laplace-Beltrami operator are bounded on $L^r(M)$ for some $r\in (2;N]$, then $\na (-\D-V)^{-
 1/2}$ and $V^{1/2}(-\D-V)^{-1/2}$ are bounded on $L^p(M)$ for all $p\in (1;r)$.\\

For the proof of the boundedness of Riesz transforms we use  off-diagonal estimates (for properties and more details see \cite{am}). These estimates are a generalization of the Gaussian estimates used by Coulhon and Duong in \cite{cduong} to study the Riesz transforms associated to the Laplace-Beltrami operator on Riemannian manifolds, and by Duong, Ouhabaz and Yan in \cite{doy} to study the magnetic Schr\"{o}dinger operator on $\R^N$.  We also use the approach of Blunck and Kunstmann in \cite{BK} and \cite{bk3} to weak type $(p,p)$-estimates. In \cite{a}, Auscher used these tools to divergence-form operators with complex coefficients.
For $p\in (2;N)$ we use a complex interpolation method (following an idea in Auscher and Ben Ali \cite{aba}).\\
In contrast to \cite{gh1} and \cite{gh2}, we do not assume decay nor smoothness conditions on $V$.\\

In the following sections, we denote by $L^p$ the Lebesgue space $L^p(\R^N)$ with the Lebesgue measure $dx$, $|| . ||_p$ its usual norm, $(.,.)$ the inner product of $L^2$,  $|| . ||_{p-q}$ the norm of operators acting from $L^p$ to $L^q$. We denote by $p'$ the dual exponent to $p$, $p':=\frac{p}{p-1}$. We denote by $C, c$ the positive constants even if their values change at each occurrence. Through this paper, $\na A^{-1/2}$ denotes one of  the partial derivative $\frac{\partial}{\partial x_k}A^{-1/2}$ for any fixed $k\in \{1,...,N\}$.

 \section{Off-diagonal estimates}

In this section, we show that $(e^{-tA})_{t>0}, (\sqrt{t} \na e^{-tA})_{t>0}$ and $(\sqrt{t} V^{1/2} e^{-tA})_{t>0}$ satisfy  $L^p-L^2$ off-diagonal estimates provided that $V$ is strongly subcritical.\\

 \begin{df}\label{def}
Let $(T_t)_{t>0}$ be a family of uniformly bounded operators on $L^2$. We say that $(T_t)_{t>0}$ satisfies $L^p-L^q$ off-diagonal estimates for  $p,q \in [1;\infty]$ with $p\le q$ if there exist positive constants $C$ and $c$ such that for all closed sets $E$ and $F$ of $\R^N$ and all $h\in L^p(\R^N)\cap L^2(\R^N)$with support in $E$, we have for all $t>0$:
\[ \N T_th\N_{L^q(F)}\le Ct^{-\gamma_{pq}}e^{-\frac{cd(E,F)^2}{t}}\N h\N_p,\]
where $d$ is the Euclidean distance and $\gamma_{pq}:=\frac{N}{2}\big(\frac{1}{p}-\frac{1}{q}\big)$.
 \end{df}

\begin{prop}\label{22}
Let $A=-\Delta-V$ where $V\ge0$ and $V$ is strongly subcritical. Then $(e^{-tA})_{t>0}$,  $(\sqrt{t}\na e^{-tA})_{t>0}$, and $(\sqrt{t} V^{1/2} e^{-tA})_{t>0}$ satisfy  $L^2-L^2$ off-diagonal estimates, and we have for all $t >0$ and all $f \in L^2$ supported in $E$:
\begin{enumerate}
\item[(i)] $|| e^{-tA}f ||_{L^2(F)}\le  e^{-d^2(E,F)/4t} || f||_2,$ 
\item[(ii)] $|| \sqrt{t}\na e^{-tA}f ||_{L^2(F)}\le C e^{-d^2(E,F)/16t} || f||_2, $
\item[(iii)] $|| \sqrt{t}V^{1/2} e^{-tA}f ||_{L^2(F)}\le C e^{-d^2(E,F)/8t} || f||_2. $
\end{enumerate}
\end{prop}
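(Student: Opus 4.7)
The plan is to use the Davies perturbation method: conjugate the semigroup with an exponential weight, reduce the three estimates to a single energy identity, and optimize the weight. Concretely, fix a bounded Lipschitz function $\phi:\R^N\to\R$ with $|\na\phi|\le 1$ (in practice take $\phi(x)=\min(d(x,E),K)$ and let $K\to\i$ at the end), a parameter $\rho>0$, and for $f\in L^2$ supported in $E$ set $u(s)=e^{-sA}f$ and $v(s)=e^{\rho\phi}u(s)$. Writing $u=e^{-\rho\phi}v$ and integrating by parts gives the key identity
\[(Au,e^{2\rho\phi}u)=\int|\na v|^2-\int Vv^2-\rho^2\int v^2|\na\phi|^2.\]
Since $\partial_s v=-e^{\rho\phi}Au$, this produces
\[\frac{1}{2}\frac{d}{ds}\N v\N_2^2+\int|\na v|^2-\int Vv^2=\rho^2\int v^2|\na\phi|^2\le\rho^2\N v\N_2^2.\]
Rewriting strong subcriticality as $\int|\na w|^2-\int Vw^2\ge\frac{\e}{1+\e}\int|\na w|^2\ge\e\int Vw^2$ makes the left-hand side nonnegative, so Gronwall yields $\N v(s)\N_2\le e^{\rho^2 s}\N f\N_2$ (using $\phi=0$ on $E$, where $f$ is supported). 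Since $\phi\ge d(E,F)$ on $F$, restricting to $F$ gives $\N e^{-tA}f\N_{L^2(F)}\le e^{\rho^2 t-\rho d(E,F)}\N f\N_2$, and the choice $\rho=d(E,F)/(2t)$ produces (i) with the sharp exponent $1/4$.

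For (ii) and (iii) I would pass to the conjugated operator $A_\rho:=e^{\rho\phi}Ae^{-\rho\phi}$ and its shift $B_\rho:=A_\rho+\rho^2 I$. The identity above shows $B_\rho$ is defined by the closed nonnegative quadratic form
\[\b_\rho(w)=\int|\na w|^2-\int Vw^2+\rho^2\int w^2(1-|\na\phi|^2),\]
so $-B_\rho$ generates a bounded analytic semigroup on $L^2$ with $\N\sqrt{s}B_\rho^{1/2}e^{-sB_\rho}\N_{2-2}\le C$. Strong subcriticality combined with $|\na\phi|\le 1$ yields the coercivity bounds $\N\na w\N_2^2\le\frac{1+\e}{\e}\b_\rho(w)$ and $\N V^{1/2}w\N_2^2\le\frac{1}{\e}\b_\rho(w)$, and hence $\N\sqrt{s}\na e^{-sB_\rho}\N_{2-2}\le C$ and $\N\sqrt{s}V^{1/2}e^{-sB_\rho}\N_{2-2}\le C$. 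Unwinding via $e^{-sA_\rho}=e^{s\rho^2}e^{-sB_\rho}$ and the conjugation identity $e^{-sA_\rho}(e^{\rho\phi}f)=e^{\rho\phi}e^{-sA}f$, together with $V^{1/2}(e^{\rho\phi}h)=e^{\rho\phi}V^{1/2}h$ and $\na(e^{\rho\phi}h)=e^{\rho\phi}(\na h+\rho h\na\phi)$, I obtain
\[\N e^{\rho\phi}\sqrt{s}\na e^{-sA}f\N_2\le(C+\rho\sqrt{s})e^{\rho^2 s}\N f\N_2,\qquad\N e^{\rho\phi}\sqrt{s}V^{1/2}e^{-sA}f\N_2\le Ce^{\rho^2 s}\N f\N_2.\]
Restricting to $F$, extracting $e^{-\rho d(E,F)}$, and choosing $\rho=d(E,F)/(2t)$ gives (iii) directly with decay $e^{-d(E,F)^2/(4t)}$; for (ii) the polynomial prefactor is absorbed via $\frac{d(E,F)}{\sqrt{t}}e^{-d(E,F)^2/(4t)}\le Ce^{-d(E,F)^2/(8t)}$. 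In particular, the constants $1/16$ and $1/8$ stated in the proposition are not sharp.

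The one delicate technical point is justifying these operator manipulations: $A_\rho$ and $B_\rho$ must be well-defined closed operators, $B_\rho$ must be self-adjoint on the stated form domain, and the identity $e^{-sA_\rho}(e^{\rho\phi}f)=e^{\rho\phi}e^{-sA}f$ must hold as an $L^2$ identity. This is handled cleanly by first taking the bounded truncated weight $\phi_K=\min(d(\cdot,E),K)$ (so that $e^{\pm\rho\phi_K}$ is a bounded Lipschitz multiplier and all conjugations are rigorous), carrying out the argument with constants uniform in $K$, and passing to the limit $K\to\i$ via Fatou's lemma in the final Gaussian estimates.
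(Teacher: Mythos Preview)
Your overall strategy---Davies' exponential perturbation, then optimization in $\rho$---is exactly the paper's, and your derivation of (i) via the energy identity for $v=e^{\rho\phi}e^{-sA}f$ is correct and equivalent to the paper's accretivity argument for $A_\rho+\rho^2$.

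The gap is in (ii)--(iii). You assert that $B_\rho=A_\rho+\rho^2$ is the self-adjoint operator associated with the symmetric form $\b_\rho(w)=\int|\na w|^2-\int Vw^2+\rho^2\int(1-|\na\phi|^2)w^2$, and from self-adjointness you read off bounded analyticity and the bound $\N\sqrt{s}B_\rho^{1/2}e^{-sB_\rho}\N_{2-2}\le C$. But $A_\rho$ is \emph{not} self-adjoint: expanding the conjugated sesquilinear form gives
\[
\a_\rho(u,v)=\a(u,v)-\rho^2\!\int|\na\phi|^2u\bar v+\rho\!\int\big(\bar v\,\na\phi\cdot\na u-u\,\na\phi\cdot\overline{\na v}\big),
\]
and the last term is skew-symmetric (a first-order drift). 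Your expression for $\b_\rho(w)$ is only the diagonal (real) part; it does not determine $B_\rho$. Worse, $A_\rho+\rho^2$ is m-accretive but not sectorial with angle independent of $\rho$: one has $|\Im\,\a_\rho(u,u)|\le 2|\rho|\,\N u\N_2\N\na u\N_2$, while the real part of $\a_\rho(u,u)+\rho^2\N u\N_2^2$ controls $\N\na u\N_2^2$ but gives no $\rho$-uniform control of $\rho\N u\N_2$. Hence your claimed uniform bound on $\sqrt{s}\,\na e^{-sB_\rho}$ is unjustified, and with it the improved constants you obtain.

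The paper's remedy is precisely to add an \emph{extra} $\rho^2$: for $A_\rho+2\rho^2$ one has both $\Re\ge\a(u,u)$ and $\Re\ge\rho^2\N u\N_2^2$, which together dominate the imaginary part and yield sectoriality with angle independent of $\rho$; the Cauchy formula then gives $\N(A_\rho+2\rho^2)e^{-t(A_\rho+2\rho^2)}\N_{2-2}\le C/t$, hence $\N\sqrt{t}\,\na e^{-tA_\rho}\N_{2-2}\le Ce^{2\rho^2 t}$ (not $e^{\rho^2 t}$). This forces the choice $\rho=d(E,F)/(4t)$ and explains the constants $1/16$ and $1/8$ in the statement. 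Once you replace ``$B_\rho$ self-adjoint'' by this sectoriality argument, the rest of your unwinding and your truncation-in-$K$ justification are fine.
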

\begin{proof}[Proof:]
The ideas are classical and rely on the well known  Davies perturbation technique. Let $A_\rho:= e^{\rho \phi}Ae^{-\rho \phi}$ where $\rho >0$ and $\phi$ is a Lipschitz function with $|\nabla \phi |\le 1$ a.e.. Here $A_{\rho}$ is the associated operator to the sesquilinear form $\a_{\rho}$ defined by
 \[\a_{\rho}(u,v):=\a(e^{-\rho \phi}u, e^{\rho \phi}v)\]
for all $u,v \in D(\a)$.

By the strong subcriticality property of $V$ we have for all $u\in W^{1,2}$
\begin{eqnarray}
 ((A_\rho+\rho^2)u,u)&=& -\int \rho^2|\na \phi|^2u^2+\int |\na u|^2-\int Vu^2+\rho^2||u||^2_2\nonumber\\
&\ge& \e \N V^{1/2}u\N^2_2\label{rop}.
 \end{eqnarray}
Using (\ref{l2}), we obtain
\begin{eqnarray}
 ((A_\rho+\rho^2)u,u)&=& -\int \rho^2|\na \phi|^2u^2+\int |\na u|^2-\int Vu^2+\rho^2||u||^2_2\nonumber\\
&\ge& \frac{\e}{\e+1} ||\nabla u||_2^2\label{ro}.
 \end{eqnarray}
  In particular $(A_\rho+\rho^2)$ is a maximal accretive operator on $L^2$, and this implies
\begin{eqnarray}\label{ro1}
 ||e^{-tA_{\rho}}u||_2\le e^{t\rho^2}||u||_2.
\end{eqnarray}
Now we want to estimate
\[\N (A_\rho+2\rho^2)e^{-t(A_\rho+2\rho^2)}\N_{2-2}.\]
First, let us prove that $A_\rho +2\rho^2$ is a sectorial operator.

For $u$ complex-valued,
\[\a_{\rho}(u,u):=\a(u,u)+ \rho \int u\na \phi \overline{\na u}-\rho \int\overline{u} \na \phi \na u- \rho^2 \int \n \na \phi \n^2 \n u\n^2.\]
Then
\begin{eqnarray*}
 \a_{\rho}(u,u)+2\rho^2\N u\N^2_2&\ge&\a(u,u)+ \rho \int u\na \phi \overline{\na u}-\rho \int\overline{u} \na \phi \na u+ \rho^2 \N u\N_2^2\\
&=& \a(u,u) +2i\rho \Im\int u \na \phi \overline{\na u}+ \rho^2 \N u\N_2^2.
\end{eqnarray*}
This implies that
\begin{eqnarray}\label{re}
 \Re(\a_{\rho}(u,u)+2\rho^2\N u\N^2_2)\ge \a(u,u),
\end{eqnarray}
and
\begin{eqnarray}\label{reel}
 \Re(\a_{\rho}(u,u)+2\rho^2\N u\N^2_2)\ge \rho^2\N u\N^2_2.
\end{eqnarray}

On the other hand,
\begin{eqnarray*}
\a_{\rho}(u,u)&=&\a(u,u)+ \rho \int u\na \phi \overline{\na u}-\rho \int\overline{u} \na \phi \na u- \rho^2 \int \n \na \phi \n^2 \n u\n^2\\
&=& \a(u,u)+2i \rho \Im \int u\na \phi \overline{\na u}- \rho^2 \int \n \na \phi \n^2 \n u\n^2.
\end{eqnarray*}
So
\begin{eqnarray*}
|\Im(\a_{\rho}(u,u)+2\rho^2\N u\N^2_2)|&=& 2|\rho| \int |u| |\na \phi| |\overline{\na u}|\\
&\le&2|\rho| \N u\N_2 \N \na u\N_2.
\end{eqnarray*}
Using (\ref{l2}) we obtain that
\begin{eqnarray*}
|\Im(\a_{\rho}(u,u)+2\rho^2\N u\N^2_2)|&\le& 2|\rho| \N u\N_2 c_\e\a^{\frac{1}{2}}(u,u)\\
&\le&c_\e^2\a(u,u)+\rho^2\N u\N_2^2,
\end{eqnarray*}
where $c_\e=(1+\frac{1}{\e})^{\frac{1}{2}}$.
Now using estimates (\ref{re}) and (\ref{reel}), we deduce that there exists a constant $C>0$ depending only on $\e$ such that
\[|\Im(\a_{\rho}(u,u)+2\rho^2\N u\N^2_2)|\le C \Re(\a_{\rho}(u,u)+2\rho^2\N u\N^2_2).\]
We conclude that (see \cite{k} or \cite{o})
$$\N e^{-z(A_\rho +2\rho^2)}\N_{2-2}\le 1$$for all $z$ in the open sector of angle $\arctan(1/C)$. Hence by the Cauchy formula
\begin{eqnarray}\label{gr}
\N (A_\rho+2\rho^2)e^{-t(A_\rho+2\rho^2)}\N_{2-2}\le \frac{C}{t}.
\end{eqnarray}
The constant $C$ is independent of $\rho$.\\
By estimate (\ref{rop}) and (\ref{ro}) we have
\[((A_\rho+2\rho^2)u,u)\ge ((A_\rho+\rho^2)u,u)\ge \e\N V^{1/2} u\N_2^2,\]
and
\[((A_\rho+2\rho^2)u,u)\ge ((A_\rho+\rho^2)u,u)\ge \frac{\e}{\e+1}\N \na u\N_2^2.\]
 Setting $u=e^{-t(A_\rho+2\rho^2)}f$ and using  (\ref{gr}) and (\ref{ro1})

we obtain

  \begin{eqnarray}\label{na}
  ||\sqrt{t}\nabla e^{-tA_{\rho}}f||_2\le Ce^{2t\rho^2}||f||_2.
  \end{eqnarray} and

\begin{eqnarray}\label{vro}
||\sqrt{t}V^{1/2} e^{-tA_{\rho}}f||_2\le Ce^{2t\rho^2}||f||_2.
\end{eqnarray}
 Let E and F be two closed subsets of $\R^N$, $f\in L^2(\R^N)$ supported in E, and let $\phi(x):=d(x,E)$ where $d$ is the Euclidean distance. Since $e^{\rho\phi}f=f$,  we have the following relation
 \[e^{-tA}f=e^{-\rho \phi}e^{-tA_{\rho}}f.\]
 Then \[ \na e^{-tA}f=-\rho \na \phi e^{-\rho \phi}e^{-tA_{\rho}}f+e^{-\rho \phi}\na e^{-tA_{\rho}}f,\]
and
\[V^{1/2}e^{-tA}f=e^{-\rho \phi}V^{1/2}e^{-tA_{\rho}}f.\]
 Now taking the norm on $L^2(F)$, we obtain from (\ref{ro1}), (\ref{na}) and (\ref{vro})
 \begin{eqnarray}\label{ediag}
 || e^{-tA} f||_{L^2(F)}\le  e^{-\rho d(E,F)} e^{\rho^2t}|| f||_2,
 \end{eqnarray}
 \begin{eqnarray}\label{nadiag}
 ||\na e^{-tA}f ||_{L^2(F)}\le \rho  e^{- \rho d(E,F)} e^{\rho^2t}|| f||_2+ \frac{C}{\sqrt{t}} e^{- \rho d(E,F)}e^{2t\rho^2}||f||_2,
 \end{eqnarray}
 and
\begin{eqnarray}\label{vdiag}
 ||V^{1/2} e^{-tA} f||_{L^2(F)}\le \frac{C}{\sqrt{t}} e^{-\rho d(E,F)} e^{2\rho^2t}|| f||_2.
 \end{eqnarray}
We set $\rho=d(E,F)/2t$ in (\ref{ediag}) and $\rho=d(E,F)/4t$ in (\ref{vdiag}), then we get the $L^2-L^2$ off-diagonal estimates $(i)$ and $(iii).$\\
We set $\rho=d(E,F)/4t$ in (\ref{nadiag}),  we get
\[||\na e^{-tA}f ||_{L^2(F)}\le \frac{C}{\sqrt{t}}\Big(1+\frac{d(E,F)}{4\sqrt{t}}\Big) e^{-d^2(E,F)/8t} || f||_2.\]
This gives estimate $(ii)$.\qed
\end{proof}

Now, we study  the $L^p-L^2$ boundedness of the semigroup, of its gradient, and of $(V^{1/2}e^{-tA})_{t>0}$.

\begin{prop}\label{p2}
Suppose that $A\ge \e V$, then $(e^{-tA})_{t>0}$, $(\sqrt{t}\na e^{-tA})_{t>0}$ and $(\sqrt{t}V^{1/2}e^{-tA})_{t>0}$ are $L^p-L^2$ bounded for all $p\in(p'_0;2]$. Here $p'_0$ is the dual exponent of $p_0$ where $p_0=\frac{2N}{(N-2)\big(1-\sqrt{1-\frac{1}{1+\e}}\big)}$, and the dimension $N\ge 3$. More precisely we have for all $t>0$:
\begin{itemize}
 \item[i)] $\N e^{-tA}f\N_2\le Ct^{-\gamma_{p}}\N f\N_p$,
\item [ii)]$\N \sqrt{t}\na e^{-tA}f\N_2\le Ct^{-\gamma_{p}}\N f\N_p$,
\item[iii)]$\N \sqrt{t}V^{1/2} e^{-tA}f\N_2\le Ct^{-\gamma_{p}}\N f\N_p$,
\end{itemize}
 where $\gamma_{p}=\frac{N}{2}\big(\frac{1}{p}-\frac{1}{2}\big)$.
 \end{prop}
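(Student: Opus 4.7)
The plan is to prove (i) first and then deduce (ii) and (iii) from (i) by a semigroup composition argument using Proposition~\ref{22}.

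For (i), I would combine the Sobolev embedding with Proposition~\ref{22}(ii) and the Liskevich-Sobol-Vogt uniform $L^p$ bounds. Strong subcriticality and (\ref{l2}) give $\N\na u\N_2\le C\N A^{1/2}u\N_2$; together with Sobolev $\N u\N_{2^*}\le C\N\na u\N_2$ (valid for $N\ge 3$) and Proposition~\ref{22}(ii) applied with $E=F=\R^N$, this yields
\[
\N e^{-tA}f\N_{2^*}\le C\N\na e^{-tA}f\N_2\le Ct^{-1/2}\N f\N_2.
\]
Since $A$ is self-adjoint, duality gives $\N e^{-tA}\N_{(2^*)'-2}\le Ct^{-1/2}$, i.e.\ (i) at $p=(2^*)'$ (note $\gamma_{(2^*)'}=1/2$). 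Riesz-Thorin interpolation between this bound and the $L^2$-contractivity extends (i) to all $p\in[(2^*)',2]$. To cover the remaining range $p\in(p_0',(2^*)')$ I invoke the uniform $L^r$-$L^r$ boundedness of $e^{-tA}$ for $r\in(p_0',p_0)$ due to Liskevich-Sobol-Vogt, and combine it with the endpoint $L^{(2^*)'}\to L^2$ bound by a multi-endpoint Riesz-Thorin, or alternatively by iterating the semigroup identity $e^{-tA}=e^{-tA/2}\circ e^{-tA/2}$.

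For (ii) and (iii), I factor
\[
\sqrt{t}\,\na e^{-tA} = \sqrt{2}\,\bigl(\sqrt{t/2}\,\na e^{-tA/2}\bigr)\circ e^{-tA/2},
\]
and similarly with $V^{1/2}$ in place of $\na$. The first factor is $L^2$-$L^2$ bounded by Proposition~\ref{22}(ii) (respectively (iii)) applied with $E=F=\R^N$, while the second factor is $L^p$-$L^2$ bounded by (i) just established; composing gives the claimed $L^p$-$L^2$ bound with decay $t^{-\gamma_p}$.

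The main obstacle I anticipate is the extension of (i) from $[(2^*)',2]$ to the full range $(p_0',2]$: a direct two-endpoint Riesz-Thorin between $\N e^{-tA}\N_{(2^*)'-2}\le Ct^{-1/2}$ and the uniform $L^r$-$L^r$ bound recovers only $[(2^*)',2]$, because the target point $(1/p,1/2)$ with $p<(2^*)'$ lies outside the convex hull of the endpoints in the $(1/\text{source},1/\text{target})$ plane. Overcoming this will require either a Stein-type multi-endpoint complex interpolation exploiting the analyticity of $(e^{-tA})$ on $L^r$, $r\in(p_0',p_0)$, or a Blunck-Kunstmann-style annular decomposition using the $L^2$-$L^2$ off-diagonal Gaussian estimates of Proposition~\ref{22}.
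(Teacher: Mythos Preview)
Your factorizations for (ii) and (iii) are essentially the paper's own argument (the paper writes $\sqrt{t}\,\na e^{-tA}=\na A^{-1/2}\cdot\sqrt{t}A^{1/2}e^{-tA/2}\cdot e^{-tA/2}$, which amounts to the same thing), so those parts are fine.

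For (i), however, the paper takes a different and cleaner route that bypasses the obstacle you correctly identified. Instead of Sobolev $+$ duality $+$ interpolation, the paper runs the classical Nash--Varopoulos differential-inequality argument: apply the Gagliardo--Nirenberg inequality
\[
\|u\|_2^2\le C\,\|\na u\|_2^{2a}\,\|u\|_p^{2b},\qquad a=\frac{2\gamma_p}{1+2\gamma_p},\ b=1-a,
\]
to $u=e^{-tA}f$; use the strong subcriticality bound $\|\na u\|_2^2\le(1+1/\e)(Au,u)$ to recognise $\|\na e^{-tA}f\|_2^2\le -C\psi'(t)$ with $\psi(t)=\|e^{-tA}f\|_2^2$; and use the Liskevich--Sobol--Vogt uniform $L^p$ bound for $\|e^{-tA}f\|_p$. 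This yields $\psi(t)^{1/a}\le -C\psi'(t)\|f\|_p^{2b/a}$, which integrates immediately to $\psi(t)\le C t^{-2\gamma_p}\|f\|_p^2$ for every $p\in(p_0',2]$ in one stroke.

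Your Sobolev-duality step only reaches $[(2^*)',2]$, and the remedies you list for the remaining interval $(p_0',(2^*)')$---multi-endpoint Stein interpolation, semigroup iteration, or an off-diagonal annular decomposition---can each be made to work, but they are noticeably heavier (the iteration, for instance, requires interpolating against $L^r$--$L^r$ bounds for some $r<p$ and then bootstrapping through a finite chain of exponents). The Nash method exploits the extra structure you are not using, namely the monotonicity $-\tfrac{d}{dt}\|e^{-tA}f\|_2^2=2(Ae^{-tA}f,e^{-tA}f)\ge c\|\na e^{-tA}f\|_2^2$, and this is precisely what collapses the whole range into a single ODE estimate.
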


\begin{proof} \textit{i)} We apply the Gagliardo-Nirenberg inequality
 \[||u||^2_2\le C_{a,b} ||\na u||^{2a}_2||u||^{2b}_p,\] where $a+b=1$ and $(1+2\gamma_p)a=2\gamma_p$, to $u=e^{-tA}f$ for all $f\in L^2\cap L^p$, all $ t>0$, and all $p\in(p'_0;2]$.  We obtain
\[ ||e^{-tA}f||^2_2\le C_{a,b}||\na e^{-tA}f||^{2a}_2||e^{-tA}f||^{2b}_p.\]
At present we use the boundedness of the semigroup on $L^p$ for all $p\in (p'_0;2]$ proved in \cite{LSV}, and the fact that $||\na u||_2^2\le (1+1/\e)(Au,u)$ from the strong subcriticality condition, then we obtain that
\[||e^{-tA}f||^{2/a}_2\le-C\psi'(t)||f||_p^{2b/a}\]
where $\psi(t)=||e^{-tA}f||^2_2$.
This implies \[||f||^{-2b/a}_p\le C (\psi(t)^{\frac{a-1}{a}})'.\]
Since $ \frac{2b}{a}=\frac{1}{\gamma_p}$ and $\frac{a-1}{a}=-\frac{1}{2\gamma_p}$,
integration  between $0$ and $t$ yields
\[t||f||^{-1/\gamma_p}_p\le C||e^{-tA}f||^{-1/\gamma_p}_2,\]
which gives \textit{i)}.\\
We obtain \textit{ii)} by using the following decomposition:
\[\sqrt{t}\na e^{-tA}=\sqrt{t}\na A^{-1/2}A^{1/2} e^{-tA/2}e^{-tA/2},\]
the boundedness of $\na A^{-1/2}$ and of $(\sqrt{t}A^{1/2} e^{-tA})_{t>0}$ on $L^2$, and the fact that $(e^{-tA})_{t>0}$ is  $L^p-L^2$ bounded for all $p\in(p'_0;2]$ proved in \textit{i)}.\\
We prove \textit{iii)} by using the following decomposition:
\[\sqrt{t}V^{1/2} e^{-tA}=\sqrt{t}V^{1/2} A^{-1/2}A^{1/2} e^{-tA/2}e^{-tA/2},\]
the boundedness of $V^{1/2} A^{-1/2}$ and of $(\sqrt{t}A^{1/2} e^{-tA})_{t>0}$ on $L^2$, and the fact that $(e^{-tA})_{t>0}$ is  $L^p-L^2$ bounded for all $p\in(p'_0;2]$ proved in \textit{i)}.\qed
\end{proof}

We invest the previous results to obtain :

\begin{teo}\label{ehd}
Assume that $A\ge \e V$ then  $(e^{-tA})_{t>0}$, \  $(\sqrt{t}\na e^{-tA})_{t>0}$ \quad and\ \ \quad
$(\sqrt{t}V^{1/2}e^{-tA})_{t>0}$ satisfy $L^p-L^2$ off-diagonal estimates for all $p\in(p'_0;2]$. Here $p'_0$ is the dual exponent of $p_0$ where $p_0=\frac{2N}{(N-2)\big(1-\sqrt{1-\frac{1}{1+\e}}\big)}$, and the dimension $N\ge 3$. Then we have for all $t>0$, all $p\in(p'_0;2]$, all closed sets $E$ and $F$ of $\R^N$ and all $f\in L^2\cap L^p$ with supp$f\subseteq E$
\begin{itemize}
\item[i)]
 \begin{eqnarray}
\N e^{-tA}f\N_{L^2(F)}\le Ct^{-\gamma_{p}}e^{-\frac{cd^2(E,F)}{t}}\N f\N_p,\label{ode}
\end{eqnarray}
\item [ii)]
\begin{eqnarray}
 \N \sqrt{t}\na e^{-tA}f\N_{L^2(F)}\le Ct^{-\gamma_{p}}e^{-\frac{cd^2(E,F)}{t}}\N f\N_p,\label{odeg}
\end{eqnarray}
\item[iii)]
\begin{eqnarray}\label{odev}
 \N \sqrt{t}V^{1/2} e^{-tA}f\N_{L^2(F)}\le Ct^{-\gamma_{p}}e^{-\frac{cd^2(E,F)}{t}}\N f\N_p,
\end{eqnarray}
\end{itemize}
 where $\gamma_{p}=\frac{N}{2}\big(\frac{1}{p}-\frac{1}{2}\big)$ and $C, c$ are positive constants.
\end{teo}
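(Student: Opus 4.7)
My plan is to deduce these $L^p$--$L^2$ off-diagonal estimates directly from Proposition \ref{22} and Proposition \ref{p2} by the Riesz--Thorin interpolation theorem applied to a suitably localized operator. The endpoint $p=2$ is already contained in Proposition \ref{22}, so it suffices to treat $p\in (p_0',2)$.

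Fix one of the three families, say $T_t=e^{-tA}$ for definiteness (the other two are treated identically), and let $E, F$ be closed subsets of $\R^N$ with $d := d(E,F)$. I consider the linear operator $S$ on $L^2(\R^N)$ defined by $Sf := \chi_F\, T_t(\chi_E f)$. Two endpoint bounds are available: by Proposition \ref{22}, $\|S\|_{L^2 \to L^2}\le e^{-d^2/(4t)}$ (with the analogous constants $1/16$, $1/8$ for the gradient and $V^{1/2}$ versions); and for any $r\in(p_0',2]$, Proposition \ref{p2} gives $\|S\|_{L^r\to L^2}\le C_r\, t^{-\gamma_r}$, via the chain of inequalities $\|\chi_F T_t(\chi_E f)\|_2 \le \|T_t(\chi_E f)\|_2 \le C_r t^{-\gamma_r}\|f\|_r$.

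For the given $p\in(p_0',2)$, I would choose any auxiliary exponent $r\in(p_0',p)$ (a non-empty interval since $p>p_0'$) and set $\theta := (\gamma_r-\gamma_p)/\gamma_r\in(0,1)$. Using $\gamma_q=\frac{N}{2}\bigl(\frac{1}{q}-\frac{1}{2}\bigr)$, a direct computation confirms that $1/p = (1-\theta)/r + \theta/2$, so applying Riesz--Thorin to $S$ with endpoints $(L^r,L^2)$ and $(L^2,L^2)$ yields
\[\|S\|_{L^p\to L^2}\le \bigl(C_r\, t^{-\gamma_r}\bigr)^{1-\theta}\bigl(e^{-d^2/(4t)}\bigr)^\theta = C\, t^{-\gamma_p}\, e^{-c\, d^2/t},\]
where the prefactor $t^{-\gamma_p}$ arises from the identity $(1-\theta)\gamma_r=\gamma_p$ and $c := \theta/4 > 0$. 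Unwinding the definition of $S$ gives precisely the off-diagonal bound (i) of Definition \ref{def} for $e^{-tA}$ at the exponent $p$. The same argument applied to $\sqrt{t}\,\nabla e^{-tA}$ and $\sqrt{t}\,V^{1/2} e^{-tA}$, using the corresponding endpoint bounds from Propositions \ref{22} and \ref{p2}, delivers (ii) and (iii).

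Since the whole proof reduces to a single application of Riesz--Thorin, I do not anticipate a significant obstacle. The only mild loss is that the Gaussian decay constant $c=\theta/4$ is strictly smaller than the constants $1/4$, $1/16$, $1/8$ from Proposition \ref{22}, and depends on $p$ (through the choice of $r$); but Definition \ref{def} only requires the existence of some positive $c$, so this is harmless.
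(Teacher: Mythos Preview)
Your proposal is correct and follows essentially the same route as the paper: interpolate the localized operator $\chi_F T_t \chi_E$ between the $L^2$--$L^2$ off-diagonal estimate of Proposition \ref{22} and the global $L^r$--$L^2$ bound of Proposition \ref{p2} via Riesz--Thorin. The paper phrases this a bit more tersely and does not spell out the auxiliary exponent $r\in(p_0',p)$ or the value of $\theta$; your version makes explicit why the interpolation actually lands at the desired exponent $p$ with the correct power $t^{-\gamma_p}$, which is a point the paper leaves implicit.
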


\textbf{\textit{Remark:}} By duality, we deduce from (\ref{ode}) a $L^2-L^p$ off-diagonal estimate of the norm of the semigroup for all $p\in[2;p_0)$, but we cannot deduce from (\ref{odeg}) and (\ref{odev}) the same estimate of the norm of $\sqrt{t}\na e^{-tA}f$ and of $\sqrt{t}V^{1/2} e^{-tA}f$  because they are not selfadjoint. This affects the boundedness of Riesz transforms and of $V^{1/2}A^{-1/2}$ on $L^p$ for $p>2$.\\

\begin{proof} \textit{ i)} In the previous proposition we have proved that
\[\N e^{-tA}f\N_2\le Ct^{-\gamma_{p}}\N f\N_p\]
 for all $p\in(p'_0;2]$. This implies that for all $t>0$
\[\N \chi_Fe^{-tA}\chi_Ef\N_2\le Ct^{-\gamma_{p}}\N f\N_p\]
where $\chi_M$ is the characteristic function of $M$.
The $L^2-L^2$ off-diagonal estimate proved in the Proposition \ref{22} implies that
 \[\N \chi_Fe^{-tA}\chi_Ef\N_2\le e^{-d^2(E,F)/4t}\N f\N_2.\]
Hence we can apply the Riesz-Thorin interpolation theorem and we obtain the off-diagonal estimate (\ref{ode}).\\
Assertions \textit{ii)} and \textit{iii)}  are proved in a similar way. We use $L^2-L^2$ off-diagonal estimates of  Proposition \ref{22} and assertions \textit{ii)} and  \textit{iii)} of Proposition \ref{p2}.\qed
\end{proof}

\section{Boundedness of $\na A^{-1/2}$ and $V^{1/2}A^{-1/2}$ on $L^p$ for $p\in(p_0';2]$}
This section is devoted to the study of the boundedness of $V^{1/2}A^{-1/2}$ and Riesz transforms  associated to  Schr\"{o}dinger operators with negative and strongly subcritical potentials. We prove that  $\na A^{-1/2}$ and $V^{1/2}A^{-1/2}$ are bounded on $L^p(\R^N), N\ge3$, for all $p\in (p'_0;2]$, where $p_0'$ is the exponent mentioned in Theorem \ref{ehd}.\\

\begin{teo}\label{rt}
 Assume that $A\ge \e V$ , then  $ \na A^{-1/2}$ is bounded on $L^p(\R^N)$ for $N\ge3$, for all $p\in (p'_0;2]$ where  $p'_0=\Big(\frac{2N}{(N-2)\big(1-\sqrt{1-\frac{1}{1+\e}}\big)}\Big)'.$
\end{teo}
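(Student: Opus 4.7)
The plan is to combine the $L^2$ boundedness $\N\na A^{-1/2}f\N_2\le C\N f\N_2$ (which follows from (\ref{l2})) with a weak type $(p,p)$ estimate for every fixed $p\in(p'_0,2)$; the Marcinkiewicz interpolation theorem then yields the strong $L^p$ bound for all $p\in(p'_0,2]$. For the weak type inequality I would invoke the Blunck-Kunstmann criterion (\cite{BK},\cite{bk3}): writing $C_j(B):=2^{j+1}B\setminus 2^jB$, it suffices to find a family $\{\mathcal{A}_r\}_{r>0}$ such that for every ball $B$ of radius $r$, every $j\ge 2$, and every $f\in L^p$ supported in $B$,
\[\Big(\frac{1}{|2^{j+1}B|}\int_{C_j(B)}|\na A^{-1/2}(I-\mathcal{A}_r)f|^2\Big)^{\!1/2}+\Big(\frac{1}{|2^{j+1}B|}\int_{C_j(B)}|\mathcal{A}_rf|^2\Big)^{\!1/2}\le\alpha_j\Big(\frac{1}{|B|}\int_B|f|^p\Big)^{\!1/p},\]
with coefficients satisfying $\sum_j\alpha_j 2^{jN/2}<\i$.

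I would take $\mathcal{A}_r:=I-(I-e^{-r^2A})^m$ for an integer $m\ge 1$ to be fixed depending on $N$ and $p$. Expanding by the binomial theorem gives $\mathcal{A}_r=\sum_{k=1}^{m}\binom{m}{k}(-1)^{k+1}e^{-kr^2A}$, and applying Theorem \ref{ehd}(i) with $t=kr^2$ to each term at once yields the desired estimate on $\mathcal{A}_rf$: since $d(B,C_j(B))\gtrsim 2^jr$, the resulting Gaussian factor $e^{-c4^j/k}$ provides much more summability than needed.

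The delicate piece is the term $\na A^{-1/2}(I-e^{-r^2A})^m$. I would use the functional-calculus representation
\[\na A^{-1/2}(I-e^{-r^2A})^mf=\frac{1}{\sqrt{\pi}}\int_0^\i\big[\sqrt{t}\,\na e^{-tA}\big](I-e^{-r^2A})^mf\,\frac{dt}{t}\]
and split the integral at $t=r^2$. On $[r^2,\i)$, a binomial expansion reduces each summand to $\sqrt{t}\,\na e^{-(t+kr^2)A}$, whose $L^p-L^2$ off-diagonal norm on $C_j(B)$ is controlled by Theorem \ref{ehd}(ii); the $dt/t$ integral converges at infinity thanks to the factor $t^{-\gamma_p}$. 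On $(0,r^2]$ the key is to exploit cancellation in $(I-e^{-r^2A})^m$: writing $I-e^{-r^2A}=\int_0^{r^2}Ae^{-sA}\,ds$, iterating $m$ times, and estimating $\sqrt{t}\,\na A^m e^{-(t+s_1+\cdots+s_m)A}$ by combining semigroup analyticity with Theorem \ref{ehd}(ii) produces an extra factor $\sim(t/r^2)^m$, which for $m$ large enough cancels both the $1/t$ singularity of the time integral and the polynomial losses coming from annular localisation.

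The main obstacle is precisely this last step: one must tune $m$ so that the factor $(t/r^2)^m$ defeats all the polynomial prefactors accumulated in the small-$t$ regime, while the Gaussian off-diagonal factor $e^{-cd^2(B,C_j(B))/t}$ still produces summability in $j$ against the weight $2^{jN/2}$. Once $m$ is fixed accordingly, the Blunck-Kunstmann theorem delivers the weak type $(p,p)$ estimate, and interpolating with the $L^2$ bound finishes the proof.
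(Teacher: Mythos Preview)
Your overall strategy --- $L^2$ bound plus weak type $(p,p)$ via Blunck--Kunstmann with $\mathcal{A}_r=I-(I-e^{-r^2A})^m$ --- matches the paper, and the $\mathcal{A}_rf$ term is handled correctly. But there is a genuine gap in the main term, and it is already visible in your statement of the summability condition: Theorem~\ref{bk} requires $\sum_j g(j)\,2^{jN}<\infty$, not $2^{jN/2}$. With the correct weight your large-$t$ argument fails. On $[r^2,\infty)$ a term-by-term binomial expansion of $(I-e^{-r^2A})^m$ discards all cancellation: each summand $\sqrt{t}\,\na e^{-(t+kr^2)A}$ contributes, via Theorem~\ref{ehd}(ii), a bound of order $t^{-\gamma_p}e^{-c4^jr^2/t}$, and $\int_{r^2}^\infty t^{-\gamma_p-1}e^{-c4^jr^2/t}\,dt\sim r^{-2\gamma_p}2^{-2j\gamma_p}$. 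After normalisation this gives $g(j)\sim 2^{-j(2\gamma_p+N/2)}$, which is summable against $2^{jN}$ only when $\gamma_p>N/4$, i.e.\ $p<1$. So ``converges at infinity thanks to $t^{-\gamma_p}$'' is not enough: you need decay in $j$, not merely finiteness.

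You already possess the fix but applied it to the wrong half of the integral. The identity $(I-e^{-r^2A})^m=\big(\int_0^{r^2}Ae^{-sA}\,ds\big)^m$ is precisely what is needed for \emph{large} $t$: it produces the extra factor $r^{2m}(t+\sigma)^{-m}\sim (r^2/t)^{m}$, turning the large-$t$ contribution into $Cr^{-2\gamma_p}2^{-2j(m+\gamma_p)}$, which is summable against $2^{jN}$ once $m>N/4-\gamma_p$. For \emph{small} $t$ no cancellation is needed at all, since the Gaussian $e^{-c4^jr^2/t}$ already gives super-exponential decay in $j$. The paper packages the same idea a little differently, writing $\na A^{-1/2}(I-e^{-r^2A})^m f=C\int_0^\infty g_{r^2}(t)\,\na e^{-tA}f\,dt$ with $g_{r^2}(t)=\sum_{k=0}^m\binom{m}{k}(-1)^k\chi_{\{t>kr^2\}}(t-kr^2)^{-1/2}$; the cancellation then appears as the pointwise estimate $|g_{r^2}(t)|\le Cr^{2m}t^{-m-1/2}$ for $t>(m+1)r^2$, while on each interval $(kr^2,(k+1)r^2]$ one keeps the crude bound $|g_{r^2}(t)|\le C(t-kr^2)^{-1/2}$ and relies on the Gaussian.
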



 To prove Theorem \ref{rt}, we prove that $\na A^{-1/2}$ is of weak type $(p,p)$ for all $p\in (p'_0;2)$ by using the following theorem of Blunck and Kunstmann  \cite{BK}. Then by the boundedness of $\na A^{-1/2}$  on $L^2$, and the Marcinkiewicz interpolation theorem, we obtain boundedness on $L^p$ for all $p\in (p'_0;2]$.  This result can also be deduced from Theorem \ref{ehd} together with Theorem 1.1 of \cite{bk3}.

\begin{teo}\label{bk}
Let $p\in[1;2)$. Suppose that $T$ is sublinear operator of strong type $(2,2)$, and let $(A_r)_{r>0}$ be a family of linear operators acting on $L^2$.

Assume that for $j\ge2$
\begin{eqnarray}\label{1}
\left(\frac{1}{\n 2^{j+1}B\n}\int_{C_j(B)}\n T(I-A_{r(B)})f\n^2\right)^{1/2}\le g(j) \left(\frac{1}{\n B\n}\int_B\n f\n^p\right)^{1/p},
\end{eqnarray}
and for $j\ge1$
\begin{eqnarray}\label{2}
 \left(\frac{1}{\n 2^{j+1}B\n}\int_{C_j(B)}\n A_{r(B)}f\n^2\right)^{1/2}\le g(j) \left(\frac{1}{\n B\n}\int_B\n f\n^p\right)^{1/p},
\end{eqnarray}
for all ball $B$  with radius $r(B)$ and all $f$ supported in $B$. If $\Sigma :=\sum g(j)2^{Nj}<\i$, then $T$ is of weak type $(p,p)$, with a bound depending only on the strong type $(2,2)$ bound of $T$, p, and $\Sigma$. \\Here $C_1=4B$ and $C_j(B)=2^{j+1}B \smallsetminus  2^jB$ for $ j\ge2$, where $\lambda B$ is the ball of radius $\lambda r(B)$ with the same center as $B$, and $\n \lambda B\n$ its Lebesgue measure. \\
\end{teo}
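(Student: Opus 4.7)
The plan is to deduce weak type $(p,p)$ from a Calder\'on-Zygmund decomposition of the input, combined with the strong $(2,2)$ bound for $T$, the off-diagonal hypothesis (\ref{1}), and the smoothing hypothesis (\ref{2}). Fix $f\in L^p\cap L^2$ and $\alpha>0$, and apply a Calder\'on-Zygmund decomposition of $|f|^p$ at height $\alpha^p$ to write $f=g+\sum_i b_i$ with $\|g\|_\infty\lesssim \alpha$, $\mathrm{supp}\,b_i\subseteq B_i$ (a ball of radius $r_i$), $\int|b_i|^p\lesssim \alpha^p|B_i|$, bounded overlap of $\{B_i\}$, and $\sum_i|B_i|\lesssim \alpha^{-p}\|f\|_p^p$. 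Set $h_1:=\sum_i(I-A_{r_i})b_i$ and $h_2:=\sum_i A_{r_i}b_i$, so that by sublinearity $|Tf|\le |Tg|+|Th_1|+|Th_2|$, and it suffices to show that each of the three level sets at height $\alpha/3$ has measure $\lesssim \alpha^{-p}\|f\|_p^p$.

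The $g$-piece is immediate: $\|g\|_2^2\le\|g\|_\infty^{2-p}\|g\|_p^p\lesssim \alpha^{2-p}\|f\|_p^p$, so the strong $(2,2)$ bound combined with Chebyshev's inequality gives the claim. For $h_2$ I again use the strong $(2,2)$ bound for $T$, but I must first control $\|h_2\|_2$ by duality: for $u\in L^2$ with $\|u\|_2=1$, decompose $\R^N=\bigcup_{j\ge 1}C_j(B_i)$, apply (\ref{2}) on each annulus, and use Cauchy-Schwarz in $i$; bounded overlap of $\{B_i\}$ inflates to an overlap of at most $\lesssim 2^{Nj}$ for the dilates $\{2^{j+1}B_i\}$, giving $\sum_i\|u\|_{L^2(C_j(B_i))}^2\lesssim 2^{Nj}$, and hence
\[\|h_2\|_2\;\lesssim\;\alpha\Bigl(\sum_i|B_i|\Bigr)^{1/2}\sum_{j\ge 1}g(j)2^{Nj}\;=\;\alpha\,\Sigma\,\Bigl(\sum_i|B_i|\Bigr)^{1/2}.\]
Inserting $\sum_i|B_i|\lesssim\alpha^{-p}\|f\|_p^p$ and applying Chebyshev produces the required weak bound.

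For $h_1$ I work outside $\Omega:=\bigcup_i 4B_i$, since $|\Omega|\lesssim\sum_i|B_i|$ is already acceptable. Sublinearity gives $|Th_1|\le\sum_i|T(I-A_{r_i})b_i|$, and on each annulus $C_j(B_i)$ with $j\ge 2$, Cauchy-Schwarz together with (\ref{1}) yields
\[\int_{C_j(B_i)}|T(I-A_{r_i})b_i|\le |C_j(B_i)|^{1/2}\,\|T(I-A_{r_i})b_i\|_{L^2(C_j(B_i))}\lesssim \alpha\,|B_i|\,g(j)\,2^{Nj}.\]
Summing in $j\ge 2$ absorbs a factor $\Sigma$, and summing in $i$ then gives $\int_{\Omega^c}|Th_1|\lesssim \alpha^{1-p}\,\Sigma\,\|f\|_p^p$, so Chebyshev closes the estimate. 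The main obstacle is the $L^2$ control of $h_2$: the crude bound $\|h_2\|_2\le\sum_i\|A_{r_i}b_i\|_2$ is too lossy, and only the duality argument together with the geometric fact that dilating balls of bounded overlap by factor $2^{j+1}$ inflates the overlap by at most $\lesssim 2^{Nj}$ produces the series $\sum g(j)2^{Nj}$; this explains why the summability $\Sigma<\infty$ is the natural and sharp quantitative hypothesis, and why it is also exactly what makes the $L^1$ estimate for $Th_1$ converge.
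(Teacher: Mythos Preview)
The paper does not actually prove this theorem: it is quoted from Blunck and Kunstmann \cite{BK} (see also \cite{a}, \cite{bk3}) and used as a black box in the proof of Theorem~\ref{rt}. So there is no ``paper's own proof'' to compare against.

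That said, your sketch is the standard Calder\'on--Zygmund argument that underlies \cite{BK} and \cite{a}, and it is essentially correct. A couple of small points worth tightening: (i) when you invoke sublinearity to pass from $Th_1$ to $\sum_i|T(I-A_{r_i})b_i|$ you are using countable sublinearity, which should be justified by truncating the sum and passing to the limit (or by working with finitely many $b_i$ first, as one may since $f\in L^p\cap L^2$); (ii) the overlap inflation $\sum_i\chi_{2^{j+1}B_i}\lesssim 2^{Nj}$ that you use for the $h_2$ piece is indeed a standard consequence of the bounded overlap of the CZ balls together with the doubling property in $\R^N$, but it deserves a one-line justification (e.g.\ via a covering lemma or a volume comparison). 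With those two remarks, your argument matches the proof in \cite{BK} and Auscher's memoir \cite{a}.
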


\begin{proof}[of Theorem \ref{rt}]
Let $T=\na A^{-1/2}$. We prove assumptions (\ref{1}) and (\ref{2})  with $A_r=I-(I-e^{-r^2A})^m$ for some $m>N/4-\gamma_p$, using arguments similar to Auscher \cite{a} Theorem 4.2.\\
Let us prove (\ref{2}). For $f$ supported in a ball $B$ (with radius $r$),
\begin{eqnarray*}
\frac{1}{\n 2^{j+1}B\n^{1/2}}\N A_rf\N_{L^2(C_j(B))}&=& \frac{1}{\n 2^{j+1}B\n^{1/2}}\Big |\Big| \sum_{k=1}^m\binom{m}{k}(-1)^{k+1}e^{-kr^2A}f\Big |\Big|_{L^2(C_j(B))}\\
&\le& \frac{1 }{\n 2^{j+1}B\n^{1/2}} \sum_{k=1}^m\binom{m}{k}C(kr^2)^{-\gamma_p}e^{\frac{-cd^2(B,C_j(B))}{kr^2}}\N f\N_p.
\end{eqnarray*}
for all $p\in (p'_0;2)$ and all $f\in L^2\cap L^p$  supported in $B$. Here we use the  $L^p-L^2$ off-diagonal estimates (\ref{ode})  for  $p\in(p'_0;2]$. Since $\gamma_p=\frac{N}{2}(\frac{1}{p}-\frac{1}{2})$ we obtain
\begin{eqnarray*}
  \Big(\frac{1}{\n 2^{j+1}B\n}\int_{C_j(B)} \n A_rf\n^2\Big)^{1/2}&\le& \frac{Cr^{-2\gamma_p}}{\n   2^{j+1}B\n^{1/2}}e^{\frac{-cd^2(B,C_j(B))}{mr^2}}\N f\N_p\\
&\le&C2^{-jN/2}e^{\frac{-cd^2(B,C_j(B))}{r^2}}\Big( \frac{1}{\n B\n}\int_B\n f\n^p\Big)^{1/p}.
\end{eqnarray*}
This yields, for $j=1$,
\[\Big(\frac{1}{\n 4B\n}\int_{4B} \n A_rf\n^2\Big)^{1/2}\le C2^{-N/2}\Big( \frac{1}{\n B\n}\int_B\n f\n^p\Big)^{1/p},\]
and for $j\ge 2$
\[\Big(\frac{1}{\n 2^{j+1}B\n}\int_{C_j(B)} \n A_rf\n^2\Big)^{1/2}\le C2^{-jN/2}e^{-c4^j}\Big( \frac{1}{\n B\n}\int_B\n f\n^p\Big)^{1/p}.\]
Thus assumption (\ref{2}) of Theorem \ref{bk} holds with $\sum_{j\ge 1} g(j)2^{jN}<\i$.\\
It remains to check the assumption (\ref{1}):\\
We know that \[\na A^{-1/2}f=C\int_0^\i \na e^{-tA}f\frac{dt}{\sqrt{t}}\]
then, using the Newton binomial, we get
\begin{eqnarray*}
\na A^{-1/2}(I-e^{-r^2A})^mf&=&C\int_0^\i \na e^{-tA}(I-e^{-r^2A})^mf\frac{dt}{\sqrt{t}}\\
&=&C\int_0^\i  g_{r^2}(t)\na e^{-tA}fdt
\end{eqnarray*}
where
 $$g_{r^2}(t)=\sum_{k=0}^m\binom{m}{k}(-1)^k\frac{\chi_{(t-kr^2>0)}}{\sqrt{t-kr^2}}.$$
  Hence, using the $L^p-L^2$ off-diagonal estimate (\ref{odeg}), we obtain for all $p\in(p'_0;2)$, all $j\ge 2$, and all $f\in L^2\cap L^p$  supported in $B$
\[\N \na A^{-1/2}(I-e^{-r^2A})^mf\N_{L^2(C_j(B))}\le C\int_0^\i \n g_{r^2}(t)\n t^{-\gamma_p-1/2}e^{-c4^jr^2/t}dt \N f\N_p.\]
We observe that (see \cite{a} p. 27)
\[\n g_{r^2}(t)\n \le \frac{C}{\sqrt{t-kr^2}}\quad \textrm{if} \quad kr^2<t\le (k+1)r^2\le (m+1)r^2\]
and
\[\n g_{r^2}(t)\n \le Cr^{2m}t^{-m-1/2}\quad \textrm{if} \quad t>(m+1)r^2.\]
This yields
\begin{eqnarray}
\N \na A^{-\frac{1}{2}}(I-e^{-r^2A})^mf\N_{L^2(C_j(B))}&\le&C  \sum_{k=0}^m\int_{kr^2}^{(k+1)r^2}\frac{t^{-\gamma_p-1/2}}{\sqrt{t-kr^2}}e^{-\frac{c4^jr^2}{t}}dt \N f\N_p \nonumber\\
&+&C\int_{(m+1)r^2}^\i r^{2m}t^{-\gamma_p-1-m}e^{-\frac{c4^jr^2}{t}}dt\N f\N_p\nonumber\\
&\le& I_1+I_2.\label{iiii}
\end{eqnarray}
We have
$$I_2:=C\int_{(m+1)r^2}^\i r^{2m}t^{-\gamma_p-1-m}e^{-\frac{c4^jr^2}{t}}dt\N f\N_p \le C r^{-2\gamma_p}2^{-2j(m+\gamma_p)}\N f\N_p,$$ by the Laplace transform formula, and
\begin{eqnarray*}
I_1&:=&C\N f\N_p\sum_{k=0}^m\int_{kr^2}^{(k+1)r^2}\frac{t^{-\gamma_p-1/2}}{\sqrt{t-kr^2}}e^{-\frac{c4^jr^2}{t}}dt \\
&=&C\N f\N_p\Big(\sum_{k=1}^m\int_{kr^2}^{(k+1)r^2}\frac{ t^{-\gamma_p-1/2}}{\sqrt{t-kr^2}}  e^{-\frac{c4^jr^2}{t}}dt +\int_0^{r^2}t^{-\gamma_p-1}e^{-\frac{c4^jr^2}{t}}dt\Big) \\
&=&J_1+J_2.
\end{eqnarray*}
In the preceding equation
\begin{eqnarray*}
 J_1&:=&C\N f\N_p\sum_{k=1}^m\int_{kr^2}^{(k+1)r^2}\frac{ t^{-\gamma_p-1/2}}{\sqrt{t-kr^2}}e^{-\frac{c4^jr^2}{t}}dt \\
&\le& C \N f\N_p e^{-\frac{c4^j}{m+1}}\sum_{k=1}^m(kr^2)^{-\gamma_p-1/2}\int_{kr^2}^{(k+1)r^2}(t-kr^2)^{-1/2}dt\\
&\le& Cr^{-2\gamma_p}2^{-2j(m+\gamma_p)}\N f\N_p,
\end{eqnarray*}
and
\begin{eqnarray*}
 J_2&:=&C\int_0^{r^2}t^{-\gamma_p-1}e^{-\frac{c4^jr^2}{t}}dt \N f\N_p\\
&\le&C \N f\N_p e^{-\frac{c4^j}{2(m+1)}}\int_0^{r^2}t^{-\gamma_p-1}e^{-\frac{c4^jr^2}{2t}}dt\\
&\le&C \N f\N_p 2^{-2jm}\int_0^{r^2}t^{-1-\gamma_p}C(2^{-2j}r^{-2}t)^{\gamma_p}e^{-\frac{c4^jr^2}{4t}}dt\\
&\le&C \N f\N_p 2^{-2j(m+\gamma_p)}r^{-2\gamma_p}\int_0^{r^2}t^{-1}e^{-\frac{c4^jr^2}{4t}}dt\\
&\le&C r^{-2\gamma_p} 2^{-2j(m+\gamma_p)}\N f\N_p.
\end{eqnarray*}
Here, for the last inequality, we use the fact that $j\ge2$ to obtain the convergence of the integral without dependence on $r$ nor on $j$.\\
We can therefore employ these estimates in (\ref{iiii}) to conclude that
$$\N\na A^{-1/2}(I-e^{-r^2A})^mf\N_{L^2(C_j(B))}\le C r^{-2\gamma_p} 2^{-2j(m+\gamma_p)}\N f\N_p,$$
which implies
 \[\Big(\frac{1}{\n 2^{j+1}B\n}\int_{C_j(B)} \n \na A^{-\frac{1}{2}}(I-e^{-r^2A})^mf\n^2\Big)^{\frac{1}{2}}\le  C2^{-2j(m+\gamma_p+\frac{N}{4})}\Big( \frac{1}{\n B\n}\int_B\n f\n^p\Big)^{\frac{1}{p}}\]
where $\sum g(j)2^{jN}<\i$ because we set $m>N/4-\gamma_p.$\qed
\end{proof}

\begin{prop}\label{v}
 Assume that $A\ge \e V$, then  $V^{1/2} A^{-1/2}$ is bounded on $L^p(\R^N)$ for $N\ge3$, for all $p\in (p'_0;2]$ where $p_0'$ is the dual exponent of $p_0$ with $p_0=\frac{2N}{(N-2)\big(1-\sqrt{1-\frac{1}{1+\e}}\big)}.$
\end{prop}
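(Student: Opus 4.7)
The plan is to mimic exactly the argument used for $\nabla A^{-1/2}$ in Theorem \ref{rt}, with $\nabla$ replaced everywhere by $V^{1/2}$. The starting point is that $V^{1/2}A^{-1/2}$ is bounded on $L^2$ by (\ref{lv2}), so by the Marcinkiewicz interpolation theorem it suffices to establish the weak type $(p,p)$ estimate for each $p\in(p_0';2)$. As in the previous proof, I will apply the Blunck--Kunstmann criterion (Theorem \ref{bk}) to $T=V^{1/2}A^{-1/2}$ with the approximating family
\[
A_r := I-(I-e^{-r^2A})^m, \qquad m>\tfrac{N}{4}-\gamma_p.
\]

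First, condition (\ref{2}) of Theorem \ref{bk} is \emph{already proved}: its verification in the proof of Theorem \ref{rt} does not use the operator $T$ at all, only the $L^p$--$L^2$ off-diagonal estimate (\ref{ode}) for the semigroup itself, and this is common to both proofs. So this step can simply be quoted.

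The main work is to verify condition (\ref{1}). Here I would use the representation
\[
V^{1/2}A^{-1/2}(I-e^{-r^2A})^m f = C\int_0^{\infty} g_{r^2}(t)\, V^{1/2}e^{-tA}f\, dt,
\]
with the same kernel $g_{r^2}(t)=\sum_{k=0}^m\binom{m}{k}(-1)^k\chi_{(t>kr^2)}(t-kr^2)^{-1/2}$ that appeared before. Applying the $L^p$--$L^2$ off-diagonal estimate (\ref{odev}) for $(\sqrt{t}V^{1/2}e^{-tA})_{t>0}$ in place of (\ref{odeg}) gives, for $f$ supported in a ball $B$ and for $j\ge 2$,
\[
\bigl\| V^{1/2}A^{-1/2}(I-e^{-r^2A})^m f\bigr\|_{L^2(C_j(B))} \le C\int_0^{\infty} |g_{r^2}(t)|\, t^{-\gamma_p-1/2} e^{-c 4^j r^2 / t}\, dt\; \|f\|_p.
\]
This is the \emph{exact} integral that was estimated in the proof of Theorem \ref{rt}; the pointwise bounds on $g_{r^2}$ and the splitting into $I_1$ (with its subpieces $J_1, J_2$) and $I_2$ produce
\[
\bigl\| V^{1/2}A^{-1/2}(I-e^{-r^2A})^m f\bigr\|_{L^2(C_j(B))} \le C r^{-2\gamma_p} 2^{-2j(m+\gamma_p)}\|f\|_p,
\]
which yields (\ref{1}) with $g(j)=C 2^{-2j(m+\gamma_p+N/4)}$ and $\sum_j g(j) 2^{jN}<\infty$ thanks to the choice $m>N/4-\gamma_p$.

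I do not anticipate a genuine obstacle here: the proof is a verbatim transcription of the proof of Theorem \ref{rt}, the only substantive input being the $L^p$--$L^2$ off-diagonal estimate (\ref{odev}), which is already available from Theorem \ref{ehd}(iii). The only thing to note is the base case $L^2$-boundedness of $V^{1/2}A^{-1/2}$, which is not a consequence of any Hodge-type identity but rather of the strong subcriticality assumption itself via (\ref{lv2}). With that in hand, Marcinkiewicz interpolation between the weak type $(p,p)$ bound for $p\in(p_0',2)$ and the strong type $(2,2)$ bound completes the proof on $(p_0';2]$.
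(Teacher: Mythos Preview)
Your proposal is correct and follows essentially the same approach as the paper: the paper's proof also invokes the $L^2$-boundedness from (\ref{lv2}), applies Theorem \ref{bk} with $T=V^{1/2}A^{-1/2}$ and the same $A_r$, and checks (\ref{1}) and (\ref{2}) exactly as in the proof of Theorem \ref{rt} with (\ref{odev}) in place of (\ref{odeg}). Your write-up is in fact more detailed than the paper's, which simply refers back to the proof of Theorem \ref{rt}.
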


\begin{proof}
We have seen in (\ref{lv2}) that the operator $V^{1/2} A^{-1/2}$ is bounded on $L^2$. To prove its boundedness on $L^p$ for all $p\in(p_0';2]$ we prove that it is of weak type $(p,p)$ for all $p\in (p_0';2)$ by checking assumptions (\ref{1}) and (\ref{2}) of Theorem \ref{bk}, where $T=V^{1/2}A^{-1/2}$. Then, using the Marcinkiewicz interpolation theorem, we deduce boundedness on $L^p$ for all $p\in(p_0';2]$.\\
We check assumptions of Theorem \ref{bk} similarly as we did in the proof of Theorem \ref{rt}, using the $L^p-L^2$ off-diagonal estimate (\ref {odev}) instead of (\ref{odeg}).\qed
\end{proof}

Let us now move on, setting $V=c|x|^{-2}$ where $0<c< (\frac{N-2}{2})^2$, which is strongly subcritical thanks to the Hardy inequality,  we prove that the associated Riesz transforms are not bounded on $L^p$ for $p\in (1;p'_0)$ neither for $p\in (p_{0*}; \i)$. Here $p_{0*}=\frac{p_0N}{N+p_0}$ is the reverse Sobolev exponent of $p_0$. \\

\begin{prop}
Set $V$  strongly subcritical and $N\ge 3$. Assume that $\na A^{-1/2}$ is bounded on $L^p$ for some $p\in (1;p'_0)$. Then there exists an exponent $q_1\in [p;p'_0)$ such that $(e^{-tA})_{t>0}$ is  bounded on $L^{r}$ for all $r\in(q_1;2)$.
\end{prop}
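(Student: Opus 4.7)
The plan is to turn the gradient bound on $A^{-1/2}$ into a genuine semigroup bound by upgrading via Sobolev embedding and then interpolating. The ultimate purpose of this proposition in the paper is clearly to set up a contradiction with the Liskevich--Sobol--Vogt theorem: if we can produce $r<p_0'$ on which $e^{-tA}$ acts boundedly, then $\nabla A^{-1/2}$ could not have been bounded on $L^p$ for $p<p_0'$ in the first place. So the target is to show that the hypothesis forces the semigroup to act on some $L^r$ with $r<p_0'$, and hence to extract the exponent $q_1\in[p,p_0')$.

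First, I would upgrade the hypothesis through Sobolev. Since $p_0'\le 2N/(N+2)<N$ (this is visible from the formula $p_0=\frac{2N}{(N-2)(1-\sqrt{1-1/(1+\e)})}$, which forces $p_0\ge 2N/(N-2)$), the inequality $p<p_0'$ gives $p<N$, so $W^{1,p}(\R^N)\hookrightarrow L^{p^*}(\R^N)$ with $p^*=Np/(N-p)$. Combined with the assumption $\|\nabla A^{-1/2}f\|_p\le C\|f\|_p$ this immediately yields
\[
\|A^{-1/2}f\|_{p^*}\le C\|f\|_p,
\]
i.e.\ $A^{-1/2}:L^p\to L^{p^*}$ is bounded. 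By self-adjointness of $A^{-1/2}$, dualizing gives $A^{-1/2}:L^{(p^*)'}\to L^{p'}$ as well. I would then interpolate these two endpoint bounds by Riesz--Thorin with the ``free'' $L^2$-estimate $\|u\|_{2^*}\le C\|\nabla u\|_2\le C'\|A^{1/2}u\|_2$ (which follows from the classical Sobolev inequality and strong subcriticality, exactly as in \eqref{l2}), to obtain a whole family of bounds $A^{-1/2}:L^{r}\to L^{r^*}$ for all $r\in[p,2]$, and the corresponding dual family.

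Second, I would transfer these estimates to the semigroup. The cleanest route is Stein's complex interpolation applied to the analytic family $T_z=A^{-z}e^{-tA}$ in the strip $\{0\le\mathrm{Re}(z)\le 1/2\}$: on the line $\mathrm{Re}(z)=0$ the family is uniformly $L^2$-bounded by analyticity of the semigroup and unitarity of the imaginary powers $A^{-iy}$ (Kato's theorem for nonnegative self-adjoint operators), while on $\mathrm{Re}(z)=1/2$ Step 1 yields boundedness $L^p\to L^{p^*}$. The output is a mapping $A^{-z}e^{-tA}:L^{r_\theta}\to L^{s_\theta}$ on intermediate vertical lines. To convert this into a $L^r\to L^r$ bound for the semigroup alone, I would combine it with the $L^r-L^2$ off-diagonal estimates of Theorem \ref{ehd} (valid for $r\in(p_0',2]$) and a factorization of the type $e^{-tA}=A^{-1/2}\cdot A^{1/2}e^{-tA/2}\cdot e^{-tA/2}$, absorbing the Sobolev gain coming from $A^{-1/2}$. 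Calling $q_1$ the infimum of those $r$'s for which the argument produces an $L^r\to L^r$ bound, one has $q_1\in[p,p_0')$ by construction; then Riesz--Thorin with the $L^2$-contractivity of $(e^{-tA})_{t>0}$ fills in the whole interval $(q_1,2)$.

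The main obstacle is the last, transfer step: neither $A^{-z}$ nor $e^{-tA}$ alone preserves $L^r$ in the regime $r<p_0'$, and since $A^{-z}$ genuinely shifts integrability by $2\,\mathrm{Re}(z)/N$, a naive interpolation of the endpoint bounds of $T_z$ will produce mixed-norm statements, not the desired $L^r\to L^r$ boundedness. Extracting the latter requires carefully using that the Sobolev gain from $A^{-1/2}$ exactly compensates the decay in $t$ coming from $L^2$-analyticity of $A^{1/2}e^{-tA/2}$, together with the off-diagonal decay for $e^{-tA/2}$ on the ``good'' side $r\in(p_0',2]$. This bookkeeping, rather than any single deep estimate, is where the work lies.
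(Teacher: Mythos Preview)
Your first step is right and matches the paper: interpolate the Riesz bound between $L^p$ and $L^2$, then apply Sobolev to get $A^{-1/2}:L^q\to L^{q^*}$ for all $q\in[p,2]$. But the transfer step has a genuine gap, which you yourself flag. The Stein interpolation on $T_z=A^{-z}e^{-tA}$ is a detour that only produces mixed-norm estimates and never lands on an $L^r\to L^r$ bound for $r<p_0'$. Worse, your factorization $e^{-tA}=A^{-1/2}\cdot A^{1/2}e^{-tA/2}\cdot e^{-tA/2}$ places $A^{-1/2}$ \emph{last}, so the first operator hitting an $L^{q_1}$ input is $e^{-tA/2}$ --- exactly the operator whose action on $L^{q_1}$ for $q_1<p_0'$ is unavailable. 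And your closing Riesz--Thorin step only interpolates an $L^{q_1}\to L^2$ bound with $L^2\to L^2$, yielding $L^r\to L^2$, not $L^r\to L^r$.

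The paper's argument is direct and avoids all of this. Reverse the factorization: $e^{-tA}=A^{1/2}e^{-tA/2}\cdot e^{-tA/2}\cdot A^{-1/2}$. The missing idea is to choose $q_1\in[p,p_0')$ close enough to $p_0'$ that $q_1^*>p_0'$ (always possible since $q^*>q$ and $q\mapsto q^*$ is continuous). Then the chain
\[
L^{q_1}\xrightarrow{\ A^{-1/2}\ } L^{q_1^*}\xrightarrow{\ e^{-tA/2}\ } L^2\xrightarrow{\ A^{1/2}e^{-tA/2}\ } L^2
\]
is legitimate: the Sobolev jump lands in $L^{q_1^*}$ with $q_1^*\in(p_0',2)$, precisely the range where Proposition~\ref{p2} supplies the middle arrow with norm $Ct^{-\gamma_{q_1^*}}$, and analyticity on $L^2$ handles the last arrow with norm $Ct^{-1/2}$. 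This gives $\|e^{-tA}\|_{q_1\to 2}\le Ct^{-\gamma_{q_1}}$. Now interpolate with the $L^2$--$L^2$ off-diagonal estimate of Proposition~\ref{22} to obtain $L^r$--$L^2$ off-diagonal estimates for all $r\in(q_1,2)$, and invoke Lemma~3.3 of \cite{a} (off-diagonal $L^r$--$L^2$ estimates imply uniform $L^r$ boundedness) to conclude. The whole point is that $A^{-1/2}$, applied \emph{first}, lifts you from the forbidden zone $q_1<p_0'$ into the safe zone $q_1^*>p_0'$ where the semigroup estimates of Section~2 are already in hand.
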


 Consider now $V=c|x|^{-2}$ where $0<c< (\frac{N-2}{2})^2$. It is proved in \cite{LSV} that the semigroup does not act on $L^p$ for $p \notin (p'_0;p_0)$. Therefore we obtain from this proposition that the Riesz transform $\na A^{-1/2}$ is not bounded on $L^p$ for $p\in(1;p_0')$.\\

\begin{proof}
 Assume that $\na A^{-1/2}$ is bounded on $L^p$ for some $p\in (1;p'_0)$. By the boundedness on $L^2$ and the  Riesz-Thorin interpolation theorem, we get the boundedness of $\na A^{-1/2}$ on  $L^q$ for all $q\in [p;2]$. Now we apply the Sobolev inequality
\begin{eqnarray}\label{sobolev}
\N f \N_{q^*}\le C\N \na f \N_q
 \end{eqnarray}
where $q^*=\frac{Nq}{N-q}$ if $q<N$ to $f:=A^{-1/2}u$, so we get
\[\N A^{-1/2}u\N_{q^*}\le C\N \na A^{-1/2}u\N_q\le C\N u \N_q\]
for all $q\in [p;2]$. In particular, $\N A^{-1/2}\N_{q_1-q_1^*}\le C$ where $p\le q_1< p_0'$ such that $q_1^*>p_0'$.\\
Decomposing the semigroup as follows
\begin{eqnarray}\label{sem}
e^{-tA}=A^{1/2}e^{-tA/2}e^{-tA/2}A^{-1/2}\end{eqnarray}
where $A^{-1/2}$ is $L^{q_1}-L^{q_1^*}$ bounded 
, $e^{-tA/2}$ has $L^{q_1^*}-L^2$ norm bounded by $Ct^{-\gamma_{q_1^*}}$ (Proposition \ref{p2}) and $A^{1/2}e^{-tA/2}$ is $L^2-L^2$ bounded by $Ct^{-1/2}$ because of the analyticity of the semigroup on $L^2$. Therefore, we obtain
\[\N e^{-tA}\N_{q_1-2}\le Ct^{-\gamma_{q_1^*}-1/2}= Ct^{-\gamma_{q_1}}.\]
We now interpolate this norm with the $L^2-L^2$ off-diagonal estimate of the norm of $e^{-tA}$, as we did in the proof of Theorem \ref{ehd}, so we get a $L^{r}-L^2$ off-diagonal estimate for all $r\in(q_1;2)$. Then Lemma 3.3 of \cite{a} yields that
 $(e^{-tA})_{t>0}$ is  bounded on $L^{r}$ for all $r\in(q_1;2)$ for $q_1\in [p; p'_0)$ such that $q_1^*>p_0'$.\qed
\end{proof}

\begin{prop}\label{ce}
Set $V$  strongly subcritical and $N\ge 3$. Assume that
$\na A^{-1/2}$ is bounded on $L^p$ for some $p\in (p_{0*};\i)$. Then there exists an exponent $q_2> p_{0*}$  such that the semigroup $(e^{-tA})_{t>0}$ is  bounded on $L^s$ for all $s\in(2;q_2^*)$. Here $q_2^*>p_0$.
\end{prop}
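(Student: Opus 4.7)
The plan is to mimic the argument of the preceding proposition, with a duality step inserted to compensate for the fact that we are now above the Sobolev threshold $p_{0*}$ rather than below $p_0'$. First I would interpolate the hypothesized $L^p$ boundedness of $\na A^{-1/2}$ with its $L^2$ boundedness via Riesz--Thorin, yielding boundedness of $\na A^{-1/2}$ on $L^q$ for every $q\in[2;p]$. The bookkeeping step is to select $q_2\in(p_{0*},\min(p,p_0,N))$: this interval is nonempty precisely because $p>p_{0*}$, $p_{0*}<p_0$, and $p_{0*}<N$ (the last because $p_0<\i$). This choice makes every subsequent inequality fit into its appropriate range.

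Applying the Sobolev inequality (\ref{sobolev}) to $f=A^{-1/2}u$ converts the $L^{q_2}$ boundedness of $\na A^{-1/2}$ into $L^{q_2}\to L^{q_2^*}$ boundedness of $A^{-1/2}$, with $q_2^*=\frac{Nq_2}{N-q_2}>p_0$ thanks to $q_2>p_{0*}$. Self-adjointness of $A^{-1/2}$ then dualizes this to $A^{-1/2}:L^{(q_2^*)'}\to L^{q_2'}$. Using the decomposition (\ref{sem}) and pairing the above with the $L^{q_2'}\to L^2$ estimate supplied by Proposition \ref{p2} (i) (legitimate because $q_2<p_0$ forces $q_2'\in(p_0';2]$) together with the $L^2$-analyticity bound on $A^{1/2}e^{-tA/2}$, I would obtain $\N e^{-tA}\N_{(q_2^*)'\to 2}\le Ct^{-\gamma_{(q_2^*)'}}$.

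The remainder is verbatim the argument used in the proofs of Theorem \ref{ehd} and the preceding proposition: interpolating this norm estimate with the $L^2-L^2$ off-diagonal bound of Proposition \ref{22} (i) produces an $L^r-L^2$ off-diagonal estimate for every $r\in((q_2^*)';2)$; Lemma 3.3 of \cite{a} then converts it into $L^r$ boundedness of the semigroup, and self-adjointness of $e^{-tA}$ transports this by duality to $L^s$ boundedness for every $s\in(2;q_2^*)$, with $q_2^*>p_0$ as required. The main obstacle, such as it is, is the arithmetic of the opening step---confirming that the range of admissible $q_2$ is nonempty and that each of the three inequalities $q_2>p_{0*}$, $q_2<p_0$, $q_2<N$ is genuinely needed in its own place; once $q_2$ is fixed, the chain of boundedness is forced by the machinery already developed in the paper.
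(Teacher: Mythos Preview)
Your argument is correct and follows the paper's strategy in all essentials: interpolate to get $\na A^{-1/2}$ bounded on $L^{q_2}$, pick $q_2\in(p_{0*},\min(p,p_0,N))$, use Sobolev to obtain $A^{-1/2}:L^{q_2}\to L^{q_2^*}$, feed this into a semigroup decomposition to reach an endpoint norm estimate, interpolate against the $L^2$--$L^2$ off-diagonal bound, and finish with Lemma~3.3 of \cite{a}.

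The only difference is cosmetic. You dualize the Sobolev bound to $A^{-1/2}:L^{(q_2^*)'}\to L^{q_2'}$, use the decomposition (\ref{sem}) to obtain an $L^{(q_2^*)'}\to L^2$ estimate, and dualize back at the end. The paper instead uses the reversed decomposition $e^{-tA}=A^{-1/2}e^{-tA/2}A^{1/2}e^{-tA/2}$ (which it labels (\ref{se})) together with the $L^2\to L^{q_2}$ bound on $e^{-tA/2}$ coming from duality of Proposition~\ref{p2}~i), thereby landing directly on an $L^2\to L^{q_2^*}$ estimate without first dualizing $A^{-1/2}$. The two routes are exact duals of one another; the paper's saves you one explicit duality step, but nothing of substance changes.
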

Consider now $V=c|x|^{-2}$ where $0<c< (\frac{N-2}{2})^2$. It is proved in \cite{LSV} that the semigroup does not act on $L^p$ for $p \notin (p'_0;p_0)$. Therefore we obtain from this proposition that the Riesz transforms $\na A^{-1/2}$ are not bounded on $L^p$ for $p\in(p_{0*};\i)$.\\
\begin{proof}
Assume that $\na A^{-1/2}$ is bounded on $L^p$ for some $p\in (p_{0*};\i)$. Then by interpolation we obtain the boundedness of $\na A^{-1/2}$ on $L^q$ for all $q\in[2;p]$. In particular,
\[||\na A^{-1/2}||_{q_2-q_2}\le C\]
where $p_{0*}< q_2< p_0$, $q_2\le p$, $q_2< N$.
 Using the Sobolev inequality (\ref{sobolev}), we obtain that $A^{-1/2}$ is $L^{q_2}-L^{q_2^*}$ bounded where $q_2^*> p_0$.\\
Now we decompose the semigroup as follows
\begin{eqnarray}\label{se}e^{-tA}=A^{-1/2}e^{-tA/2}A^{1/2}e^{-tA/2}.
\end{eqnarray}
Thus we remark that it is $L^2-L^{q_2^*}$ bounded where $q_2^*> p_{0}$.\\ Then, using similar arguments as in the previous proof, we conclude that
 $(e^{-tA})_{t>0}$ is  bounded on $L^{s}$ for all $s\in(2;q_2^*)$ for  $p_{0*}<q_2< \text{inf}(p_0,p,N)$.\qed
\end{proof}

\section{Boundedness of $\na A^{-1/2}$ and  $V^{1/2}A^{-1/2}$ on $L^p$ for all $p\in(1;N)$}
\label{tk}

In this section we assume that $V$ is strongly subcritical in the Kato subclass $K_N^{\i}, N\ge 3$. Following Zhao  \cite{z}, we define\\
\[ K_N^{\i}:= \left\{ V\in K_N^{loc}; \lim_{B\uparrow \i} \left[ \sup_{x\in\R^N}\int_{|y|\ge B}\frac{|V(y)|}{|y-x|^{N-2}}dy \right] =0 \right\}, \]
where $K_N^{loc}$ is the class of potentials that are locally in the Kato class $K_N$.\\
 For necessary background of the Kato class see \cite{s} and references therein.\\

 We use results proved by stochastic methods to deduce a $L^1-L^\i$ off-diagonal estimate of the norm of the semigroup which leads to the boundedness of $\na A^{-1/2}$ and  $V^{1/2}A^{-1/2}$ on $L^p$ for all $p\in(1;N)$.\\

\begin{teo}\label{e.g}
Let $A$ be the Schr\"{o}dinger operator $-\Delta-V, V\ge 0$. Assume that $V$ is strongly subcritical in the class $K_N^\i, (N\ge 3),$
then  $\na A^{-1/2}$ and $V^{1/2}A^{-1/2}$ are of weak type $(1,1)$, they are bounded on $L^p$ for all $p\in(1;2]$. If in addition $V\in L^{N/2}$,
then $\na A^{-1/2}$ and $V^{1/2}A^{-1/2}$ are bounded on $L^p$ for all $p\in(1;N)$.
\end{teo}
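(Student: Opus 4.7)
The proof splits into two parts. For the range $p\in(1,2]$ I would mimic the scheme of Theorem \ref{rt} and Proposition \ref{v} using the Blunck-Kunstmann machinery, but now with $p=1$ on the right-hand side of (\ref{1}) and (\ref{2}). The key additional ingredient, compared with the range $(p_0',2]$ treated before, is an $L^1-L^2$ off-diagonal estimate for $(e^{-tA})_{t>0}$, $(\sqrt t\,\na e^{-tA})_{t>0}$ and $(\sqrt tV^{1/2}e^{-tA})_{t>0}$. The assumption $V\in K_N^\i$ is precisely what yields, via Takeda's stochastic argument quoted in the introduction, a Gaussian upper bound on the heat kernel of $A$; this directly gives the $L^1-L^2$ (in fact $L^1-L^\i$) off-diagonal estimate for the semigroup. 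For the two other families, I would proceed as in Proposition \ref{p2} and combine the factorization
\[
\sqrt t\,\na e^{-tA}=(\sqrt t\,\na A^{-1/2})\,(A^{1/2}e^{-tA/2})\,(e^{-tA/2})
\]
(and its analogue with $V^{1/2}$ in place of $\na$) with the freshly obtained $L^1-L^2$ Gaussian bound on $e^{-tA/2}$, yielding an $L^1-L^2$ norm bound of order $t^{-N/4}=t^{-\gamma_1}$. Interpolating this norm bound against the $L^2-L^2$ off-diagonal estimate of Proposition \ref{22} via Riesz-Thorin, exactly as in the proof of Theorem \ref{ehd}, produces the desired $L^1-L^2$ off-diagonal estimates. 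Plugging them into the Blunck-Kunstmann scheme of Theorem \ref{rt} (with $A_r=I-(I-e^{-r^2A})^m$ and $m$ sufficiently large) gives weak type $(1,1)$ for both $\na A^{-1/2}$ and $V^{1/2}A^{-1/2}$; Marcinkiewicz interpolation with their $L^2$ boundedness concludes the range $p\in(1,2]$.

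For the range $p\in(2,N)$ under the additional hypothesis $V\in L^{N/2}$, I would follow the complex interpolation method of Auscher-Ben Ali. The Sobolev inequality combined with the boundedness of $\na A^{-1/2}$ on $L^p$ for $p\in(1,2]$ obtained in the first part implies that $A^{-1/2}$ maps $L^p$ into $L^{p^*}$ with $p^*=Np/(N-p)$, $p<N$; H\"older's inequality with $V\in L^{N/2}$ then shows that multiplication by $V^{1/2}$ shifts the integrability exponent back, providing endpoint bounds on $V^{1/2}A^{-1/2}$ (and $VA^{-1}$) at exponents past $p=2$. One then introduces an analytic family of operators (for instance $T_z=V^{z}A^{-z-1/2}\na$ or a suitable variant) which is uniformly bounded on the two bounding vertical lines of a strip---one line supplied by the result just established for $p\le 2$, the other by the Sobolev-H\"older endpoint---and applies Stein's complex interpolation theorem to conclude the $L^p$-boundedness of $\na A^{-1/2}$ (and analogously of $V^{1/2}A^{-1/2}$) for every $p\in(2,N)$.

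The main obstacle is the range $p>2$: the remark following Theorem \ref{ehd} records that an $L^2-L^p$ off-diagonal estimate for $\sqrt t\,\na e^{-tA}$ or $\sqrt t\,V^{1/2}e^{-tA}$ cannot be obtained by dualizing, because these families are not self-adjoint. Consequently the Blunck-Kunstmann / Calder\'on-Zygmund machinery of the first part does not reach beyond $p=2$, and one is forced to the complex interpolation detour, whose second endpoint is precisely what the additional integrability $V\in L^{N/2}$ is designed to supply.
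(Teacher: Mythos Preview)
Your treatment of the range $p\in(1,2]$ is correct and coincides with the paper's: Takeda's Gaussian bound gives $L^1$--$L^2$ off-diagonal estimates for the three families, and then the Blunck--Kunstmann scheme of Theorem~\ref{rt} (or Sikora's result) yields weak type $(1,1)$.

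For $p\in(2,N)$ your outline has a genuine gap. You propose to use Sobolev plus the $p\le 2$ result to get $A^{-1/2}:L^p\to L^{p^*}$, then H\"older with $V\in L^{N/2}$ to obtain ``endpoint bounds on $V^{1/2}A^{-1/2}$ (and $VA^{-1}$) at exponents past $p=2$.'' But this does not reach past $p=2$: multiplication by $V^{1/2}\in L^N$ sends $L^{p^*}$ back to $L^p$, so you only recover $V^{1/2}A^{-1/2}:L^p\to L^p$ for the same $p\le 2$ you started with, and similarly $VA^{-1}$ only for $p\le 2N/(N+2)$. No new endpoint above $2$ is produced this way, so the interpolation cannot extend the range. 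Your suggested family $T_z=V^zA^{-z-1/2}\nabla$ inherits the same circularity.

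The paper avoids this by choosing the analytic family $F(z)=\langle(-\Delta)^zA^{-z}f,g\rangle$. On the line $\Re z=0$ the Gaussian bound gives $A$ an $H^\infty$ calculus on every $L^{p_0}$, $p_0\in(1,\infty)$, so $A^{iy}$ and $(-\Delta)^{iy}$ are bounded there. On the line $\Re z=1$ the Gaussian bound gives directly (via $\|e^{-tA}\|_{1-\infty}\le Ct^{-N/2}$ and Coulhon's lemma) that $A^{-1}:L^{p_1}\to L^{q}$ with $\tfrac1{p_1}-\tfrac1q=\tfrac2N$ for every $p_1<\tfrac N2$; H\"older with $V\in L^{N/2}$ then yields $\|VA^{-1}\|_{p_1\to p_1}\le C$, hence $\|(-\Delta)A^{-1}\|_{p_1\to p_1}\le C$ for all $p_1\in(1,\tfrac N2)$. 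Stein interpolation at $t=\tfrac12$ gives $(-\Delta)^{1/2}A^{-1/2}$ bounded on $L^p$ for all $p\in(1,N)$, and composing with the Euclidean Riesz transform handles $\nabla A^{-1/2}$. The operator $V^{1/2}A^{-1/2}$ is then obtained by writing $V^{1/2}A^{-1/2}=V^{1/2}(-\Delta)^{-1/2}\cdot(-\Delta)^{1/2}A^{-1/2}$. The point you were missing is that the large-$p$ endpoint must come from the heat-kernel Gaussian bound applied to $A^{-1}$ itself, not from bootstrapping the $p\le2$ Riesz bound.
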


\begin{proof}
We assume that $V$ is strongly subcritical in the class  $K_N^\i$. Therefore $V$ satisfies assumptions of Theorem 2 of \cite{t} (The classes $K_\i$ and $S_\i$ mentioned in \cite{t} are equivalent to the class $K_N^\i$ (see Chen \cite{ch} Theorem 2.1 and Section 3.1)).
Thus the heat kernel associated to $(e^{-tA})_{t>0}$ satisfies a Gaussian estimate. Therefore $(e^{-tA})_{t>0}$, $(\sqrt{t}\na e^{-tA})_{t>0}$, and $(\sqrt{t}V^{1/2}e^{-tA})_{t>0}$ satisfy $L^1-L^2$ off-diagonal estimates. Arguing now as in the proof  of Theorem \ref{rt} (or using Theorem 5 of \cite{sik}) we conclude that $\na A^{-1/2}$ and $V^{1/2}A^{-1/2}$ are of weak type $(1,1)$ and they are bounded on $L^p$ for all $p\in(1;2]$.

To prove the boundedness of $\na A^{-1/2}$ on $L^p$ for higher $p$ we use the Stein complex interpolation theorem (see \cite{sw} Section V.4).\\
Let us first mention that $D:= R(A)\cap L^1\cap L^\i$ is dense in $L^p$ for all $p\in(1;\i)$ provided that $V$ is strongly subcritical in $K_N^\i, N\ge3$. We prove the density as in \cite{aba}, where in our case we have  the following estimate
\begin{eqnarray}\label{k}
|f_k-f|\le k(c(-\D)+k)^{-1}f
\end{eqnarray}where $f_k:=A(A+k)^{-1}f$ and $c$ is a positive constant.
This estimate holds from the Gaussian estimate of the heat kernel associated to the semigroup $(e^{-tA})_{t>0}$.

Set $F(z):=<(-\D)^zA^{-z}f,g>$ where $f\in D$, $g\in C_0^\i(\R^N)$ and $z\in S:=\{x+iy\  \text{such that}\  \ x\in [0;1] \ \  \text{and}\ \  y \in \R^N\}.\ \ F(z)$ is admissible. Indeed, the function $z\longmapsto F(z)$ is continuous in $S$ and analytic in its interior. In addition
\begin{eqnarray}\label{az}
|F(z)|=|<A^{-z}f, (-\D)^{\overline{z}}g>|\le ||A^{-z}f||_2||(-\D)^{\overline{z}}g||_2.
\end{eqnarray}
For $\Re{\overline{z}}\in (0;1)$,  $D(-\D)\subset D((-\D)^{\overline{z}})$, so
\begin{eqnarray}\label{g}
||(-\D)^{\overline{z}}g||_2\le C ||g||_{W^{2,2}}
\end{eqnarray}
for all $z\in S$.\\
When $V$ is strongly subcritical, $A$ is non-negative self-adjoint operator on $L^2$, hence $||A^{iy}||_{2-2}\le1$ for all $y\in \R$. Therefore for all $z=x+iy\in S$ and $f=Au\in R(A)$ we have
\begin{eqnarray}\label{f}
||A^{-z}f||_2&\le& ||A^{-iy}||_{2-2}||A^{1-x}u||_2\nonumber\\
&\le& C(||u||_2+||Au||_2).
\end{eqnarray}
Here we use $D(A)\subset D(A^{1-x})$ because $(1-x)\in (0;1).$\\ 
Now we employ (\ref{g}) and (\ref{f}) in (\ref{az}) to deduce the admissibility of $F(z)$ in $S$. Thus we can apply the Stein complex interpolation theorem to $F(z)$.\\
Since $V$ is strongly subcritical and belongs to the class $K_N^\i, N\ge3$, we obtain a Gaussian estimate of the heat kernel of $A$.  Thus $A$ has a $H^\i$-bounded calculus on $L^p$ for all $p\in(1;\i)$ (see e.g. \cite{BK} Theorem 2.2). Hence
\[|F(iy)|\le ||A^{-iy}f||_{p_0}||(-\D)^{-iy}g||_{p_0'}\le C_{\gamma,p_0}e^{2\gamma|y|}||f||_{p_0}||g||_{p_0'}\]
for all $\gamma>0$, all $p_0\in(1;\i)$.\\
Let us now estimate $||VA^{-1}||_{p_1-p_1}$. By H\"{o}lder's inequality
\begin{eqnarray}\label{h}
||VA^{-1}u||_{p_1}\le||V||_{N/2}||A^{-1}u||_q
\end {eqnarray}
where $p_1<N$ and $\frac{1}{p_1}=\frac{1}{q}+\frac{2}{N}$. As mentioned above we have a Gaussian upper bound for the heat kernel. In particular \[||e^{-tA}||_{1-\i}\le Ct^{-N/2}\] for all $t>0$. Therefore $A^{-1}$ extends to a bounded operator from $L^s$ to $ L^q$ such that $s<\frac{N}{2}$ and $\frac{1}{s}=\frac{1}{q}+\frac{2}{N}$, and we have
\[||A^{-1}u||_q\le C||u||_s.\]
(see Coulhon \cite{c}). Thus $s=p_1$, $D(A)\subseteq D(V)$ and (\ref{h}) implies
\[||VA^{-1}||_{p_1-p_1}\le C\] where $C$ depends on $||V||_{N/2}$.
Hence we can estimate
\begin{eqnarray}\label{hol}
||(-\D)A^{-1}u||_{p_1}&=& ||(-\D-V+V)A^{-1}u||_{p_1}\nonumber\\
&\le& ||u||_{p_1}+||VA^{-1}u||_{p_1}\nonumber\\
&\le& C||u||_{p_1}
\end{eqnarray}
where $C$ depends on $||V||_{N/2}$.
We return to $F(z)$,
\begin{eqnarray*}
|F(1+iy)|&\le& ||(-\D)A^{-1}A^{-iy}f||_{p_1}||(-\D)^{-iy}g||_{p_1'}\\
&\le& ||(-\D)A^{-1}||_{p_1-p_1}||A^{-iy}f||_{p_1}||(-\D)^{-iy}g||_{p_1'}\\
&\le&C_{\gamma,p_1, ||V||_{N/2}} e^{2\gamma|y|}||f||_{p_1}||g||_{p_1'}
\end{eqnarray*}
for all $p_1\in (1; N/2)$ and all $\gamma>0$.

From the Stein interpolation theorem it follows that for all $t\in[0;1]$ there exists a constant $M_t$ such that
\[|F(t)|\le M_t||f||_{p_t}||g||_{p'_t}\]
where $\frac{1}{p_t}=\frac{1-t}{p_0}+\frac{t}{p_1}$. Setting $t=\frac{1}{2}$ and using a density argument we conclude that $\na A^{-1/2}$ is bounded on $L^p$ for all $p\in(1;N)$.\\

To prove boundedness of $V^{1/2}A^{-1/2}$ on $L^p$ we use the following decomposition
\[V^{1/2}A^{-1/2}=V^{1/2}(-\D)^{-1/2}(-\D)^{1/2}A^{-1/2}.\]
 Assuming $V\in L^{N/2}$ we have by H\"{o}lder's inequality
 \[||V^{1/2}u||_p\le||V^{1/2}||_N||u||_q\]
 where $p<N$ and $\frac{1}{p}-\frac{1}{q}=\frac{1}{N}$. Then by Sobolev inequality and the boundedness of Riesz transforms associated to the Laplace operator we obtain
 \begin{eqnarray}\label{ss}
 ||V^{1/2}u||_p\le C_{p,N,||V||_{N/2}}||\na u||_p\le C_{p,N,||V||_{N/2}}||(-\D)^{1/2} u||_p
 \end{eqnarray}
 for all $p\in(1;N).$ Thus if $V\in L^{N/2}$ we have for all $p\in(1;N)$
 \[||V^{1/2}(-\D)^{-1/2}||_{p-p}\le C.\]
 Using the boundedness of Riesz transforms associated to the Schr\"{o}dinger operator $A$ we have
 \[||(-\D)^{1/2}A^{-1/2}u||_p\le 
 C||u||_p\]
 for all $p\in(1;N)$.\\ 
 Therefore $V^{1/2}A^{-1/2}$ is bounded on $L^p$ for all $p\in (1;N)$ provided that $V$ is strongly subcritical in the class $K_N^\i\cap L^{N/2}, N\ge3$.
\qed \end{proof}

 \textbf{\textit{Example:}}  Set $N\ge 3$, and let us take potentials  $V$  in the Kato subclass $K_N\cap L^{N/2}$ such that $V\sim c|x|^{-\aa}$ when $x$ tends to infinity, where $\aa>2$. Suppose that $||V||_{\frac{N}{2}}$ is small enough.
 Let us  prove that these potentials are strongly subcritical, so we should prove that
\[||V^{1/2}u||_2^2\le C ||\na u||_2^2\] where $C<1$. This is (\ref{ss})
where $p=2$, and $C<1$ for $||V||_{\frac{N}{2}}$ is small enough.
 Hence these potentials are strongly subcritical. Z.Zhao \cite{z} proved that they are in the subclass $K_N^\i$. Hence they satisfy the assumptions of Theorem  \ref{e.g}. Then $\na (-\Delta-V)^{-1/2}$ and $V^{1/2}(-\Delta-V)^{-1/2}$ are bounded on $L^p$ for all $p\in(1;N).$\\

\textbf{Remarks:} 1)  The proof of the previous theorem shows that
\[||Vu||_{p_1}\le C||Au||_{p_1}\] and
\[||\D u||_{p_1}\le C||Au||_{p_1}\] for all $p_1\in (1;N/2)$.\\
2) If we consider $H=-\D+V$ a Schr\"{o}dinger operator with non-negative potential $V\in L^{N/2}$, we obtain by the previous arguments the $L^{p_1}$-boundedness of $VH^{-1}$ and $\D H^{-1}$ for all $p_1\in (1;N/2)$, and the $L^p$-boundedness of $V^{1/2}H^{-1/2}$ and $\na H^{-1/2}$ for all $p\in(1;N)$.

\section{Schr\"odinger operators on Riemannian manifolds}
Let $M$ be a non-compact complete Riemannian manifold of dimension $N\ge3$. Denote by $d\mu$ the Riemannian measure, $\rho$ the geodesic distance on $M$ and $\na$ the Riemannian gradient. Denote by $|.|$ the length in the tangent space, and by $\N.\N_p$ the norm in $L^p(M,d\mu)$.
Let $-\D$ be the positive self-adjoint Laplace-Beltrami operator on $M$. Take $V$ a  strongly subcritical positive potential on $M$, which means that there exists an $\e>0$ such that
\begin{eqnarray}\label{ssc}
\int_{M} Vu^2d\mu\le \frac{1}{1+\e}\int_{M}|\na u|^2d\mu.
\end{eqnarray}
and set $A:=-\D -V$ the associated Schr\"odinger operator on $M$. By the sesquilinear form method $A$ is well defined, non-negative, and $-A$ generates a bounded analytic semigroup $(e^{-tA})_{t>0}$ on $L^2(M)$.\\

As in $\R^N$, we have the $L^2(M)$-boundedness of $V^{1/2}A^{-1/2}$ and of the Riesz transforms $\na A^{-1/2}$ if and only if $V$ is strongly subcritical.\\

We remark that methods used in \cite{LSV} hold in manifolds. The semigroup $(e^{-tA})_{t>0}$ can be extrapolated to $L^p(M)$, and it is uniformly bounded for $p\in\Big(\big(\frac{2}{1-\sqrt{1-\frac{1}{1+\e}}}\big)'; \big(\frac{2}{1-\sqrt{1-\frac{1}{1+\e}}}\big)\Big)$. If in addition  the Sobolev inequality
\begin{eqnarray}\label{sm}
\N f\N_{L^{\frac{2N}{N-2}}(M)}\le C \N |\na f|\N_{L^2(M)}\end{eqnarray}
 for all $f\in C_0^\i(M)$ holds on $M$, then we obtain for all $t>0$
 \[\N e^{-tA}\N_{L^p(M)-L^{\frac{pN}{N-2}}(M)}\le Ct^{-1/p}\] for all $p\in\Big(\big(\frac{2}{1-\sqrt{1-\frac{1}{1+\e}}}\big)'; \big(\frac{2}{1-\sqrt{1-\frac{1}{1+\e}}}\big)\Big)$. Using the $L^2(M)-L^2(M)$ off-diagonal estimate we obtain as in \cite{LSV} the fact that $(e^{-tA})_{t>0}$ is bounded on $L^p(M)$  
 for all $p\in (p_0';p_0)$ where $p_0:=\frac{2N}{N-2}\frac{1}{1-\sqrt{1-\frac{1}{1+\e}}}$.\\ For  classes of manifolds satisfying (\ref{sm}) see \cite{sc2}. Note that (\ref{sm}) is equivalent to 
 the following Gaussian upper bound of the heat kernel $p(t,x,y)$ of the Laplace-Beltrami operator (see \cite{var} and \cite{dav})
\begin{eqnarray}\label{egm}
p(t,x,y)\le C t^{-N/2} e^{-c\rho^2(x,y)/t}\quad \forall x,y\in M, t>0.
\end{eqnarray}
 We say that $M$ is of homogeneous type if for all $x\in M$ and $r>0$
 \begin{eqnarray}\label{ht}
 \mu(B(x,2r))\le C \mu(B(x,r))
 \end{eqnarray}
 where $B(x,r):=\{y\in M\ \ \text{such that}\ \ \rho(x,y)\le r\}$.
 \\
We say that the $L^2$-Poincar\'e inequalities hold on $M$ if there exists a positive constant $C$ such that
\begin{eqnarray}\label{p}
\int_{B(x,r)}|f(y)-f_r(x)|^2d\mu (y)\le C r^2\int_{B(x,r)}|\na f(y)|^2d\mu(y)
\end{eqnarray}
for all $f\in C^\i_0(M), x\in M, r>0$, where $f_r(x):=\frac{1}{\mu(B(x,r)}\int_{B(x,r)}f(y)d\mu(y)$.\\
Saloff-Coste \cite{sc} proved that (\ref{ht}) and (\ref{p}) hold if and only if the heat kernel $p(t,x,y)$ satisfies the following Li-Yau estimate
\begin{eqnarray}\label{ly}
\frac{C e^{-c\rho^2(x,y)/t}}{\mu(B(x,\sqrt{t}))}\le p(t,x,y)\le \frac{C_1 e^{-c_1\rho^2(x,y)/t}}{\mu(B(x,\sqrt{t}))}.
\end{eqnarray}
Arguing as in the Euclidean case we obtain the following theorem

\begin{teo}
Let $M$ be a non-compact complete Riemannian manifold  of dimension $N\ge3$. Assume  (\ref{ssc}) and (\ref{sm}).
Then $(e^{-tA})_{t>0}$, \  $(\sqrt{t}\na e^{-tA})_{t>0}$ \quad and\ \ \quad
$(\sqrt{t}V^{1/2}e^{-tA})_{t>0}$ satisfy $L^p(M)-L^2(M)$ off-diagonal estimates for all $p\in(p'_0;2]$. Here $p'_0$ is the dual exponent of $p_0$ where $p_0=\frac{2N}{(N-2)\big(1-\sqrt{1-\frac{1}{1+\e}}\big)}$. Then we have for all $t>0$, all $p\in(p'_0;2]$, all closed sets $E$ and $F$ of $M$, and all $f\in L^2(M)\cap L^p(M)$ with supp$f\subseteq E$
\begin{itemize}
\item[i)]
$\N e^{-tA}f\N_{L^2(F)}\le Ct^{-\gamma_{p}}e^{-\frac{c\rho^2(E,F)}{t}}\N f\N_p,$
\item [ii)]
 $\N \sqrt{t}\na e^{-tA}f\N_{L^2(F)}\le Ct^{-\gamma_{p}}e^{-\frac{c\rho^2(E,F)}{t}}\N f\N_p,$
\item[iii)]
 $\N \sqrt{t}V^{1/2} e^{-tA}f\N_{L^2(F)}\le Ct^{-\gamma_{p}}e^{-\frac{c\rho^2(E,F)}{t}}\N f\N_p,$
\end{itemize}
 where $\gamma_{p}=\frac{N}{2}\big(\frac{1}{p}-\frac{1}{2}\big)$ and $C, c$ are positive constants.
\end{teo}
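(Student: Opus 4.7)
The plan is to mirror the three-step Euclidean argument (Proposition \ref{22} $\Rightarrow$ Proposition \ref{p2} $\Rightarrow$ Theorem \ref{ehd}), checking at each step that no ingredient essentially uses the flat structure of $\R^N$. Everything rests on the form $\a$ associated to $A = -\D - V$, the strong subcriticality (\ref{ssc}), the Sobolev inequality (\ref{sm}), and the uniform $L^p(M)$-boundedness of $(e^{-tA})_{t>0}$ on $(p_0',p_0)$ already noted just before the statement.

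First I would reproduce the $L^2(M)$--$L^2(M)$ off-diagonal estimates. Pick a Lipschitz function $\phi\colon M \to \R$ with $|\na \phi| \le 1$ $\mu$-a.e.\ (take $\phi(x) := \rho(x,E)$), and set $A_\rho := e^{\rho\phi} A e^{-\rho\phi}$ via the twisted form $\a_\rho(u,v) := \a(e^{-\rho\phi}u, e^{\rho\phi}v)$. The algebraic identity used in the proof of Proposition \ref{22} is insensitive to the underlying geometry, so together with (\ref{ssc}) it gives
\[
((A_\rho + \rho^2) u, u) \;\ge\; \e \N V^{1/2} u\N_2^2 \quad\text{and}\quad ((A_\rho + \rho^2) u, u) \;\ge\; \tfrac{\e}{\e+1}\N \na u\N_2^2,
\]
and the sectoriality estimate (\ref{gr}) goes through verbatim using $|\Im \a_\rho(u,u)| \le 2|\rho| \N u\N_2 \N |\na u|\N_2$ and (\ref{l2}). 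Thus $\N e^{-tA_\rho}\N_{2-2} \le e^{t\rho^2}$, $\N \sqrt{t}\na e^{-tA_\rho}\N_{2-2} \le Ce^{2t\rho^2}$, and $\N \sqrt{t} V^{1/2} e^{-tA_\rho}\N_{2-2} \le C e^{2t\rho^2}$. Using $e^{-tA} f = e^{-\rho\phi} e^{-tA_\rho} f$ for $f$ supported in $E$ and optimizing $\rho$ in $d(E,F)$ (now $\rho(E,F)$) yields the three $L^2(M)$--$L^2(M)$ off-diagonal estimates.

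Next I would upgrade these to $L^p(M)$--$L^2(M)$ bounds for $p \in (p_0',2]$, by reproducing the proof of Proposition \ref{p2}. The Sobolev inequality (\ref{sm}) on $M$ combined with H\"older's inequality implies the Gagliardo--Nirenberg interpolation
\[
\N u\N_2^2 \;\le\; C_{a,b}\, \N |\na u|\N_2^{2a}\, \N u\N_p^{2b}, \qquad a + b = 1, \; (1+2\gamma_p)a = 2\gamma_p,
\]
for every $p \in (p_0',2]$. Applying this to $u = e^{-tA}f$ and combining with the $L^p(M)$--uniform boundedness of the semigroup from (\ref{sm}) (as recalled just above) together with $\N |\na u|\N_2^2 \le (1 + 1/\e)(Au,u)$ produces the same ODE $\psi'(t) \le -C \psi(t)^{1/a} \N f\N_p^{-2b/a}$ for $\psi(t) := \N e^{-tA}f\N_2^2$, whose integration gives $\N e^{-tA}f\N_2 \le C t^{-\gamma_p}\N f\N_p$. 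The gradient and $V^{1/2}$ versions follow exactly as in Proposition \ref{p2}, by decomposing $\sqrt{t}\na e^{-tA} = \sqrt{t}\na A^{-1/2}\cdot A^{1/2}e^{-tA/2}\cdot e^{-tA/2}$ (and analogously with $V^{1/2}$) and using the $L^2(M)$-boundedness of $\na A^{-1/2}$ and $V^{1/2}A^{-1/2}$ (which themselves follow from (\ref{ssc}) as in (\ref{l2}), (\ref{lv2})).

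Finally I would combine the two families of estimates by Riesz--Thorin interpolation, exactly as in the proof of Theorem \ref{ehd}: for $f$ supported in $E$ the operator $T_t f := \chi_F e^{-tA}(\chi_E f)$ satisfies both $\N T_t f\N_2 \le C t^{-\gamma_p}\N f\N_p$ and $\N T_t f\N_2 \le e^{-\rho^2(E,F)/4t}\N f\N_2$; interpolating between $p$ and $2$ on the $L^r$-scale yields the claimed $L^p(M)$--$L^2(M)$ off-diagonal estimate with a Gaussian factor $e^{-c\rho^2(E,F)/t}$. The same scheme, plugged into the corresponding $L^2$--$L^2$ estimates for $\sqrt{t}\na e^{-tA}$ and $\sqrt{t}V^{1/2}e^{-tA}$, gives the remaining two assertions. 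I do not foresee a genuine obstacle: the only step where the Euclidean structure appeared to play a role was the Gagliardo--Nirenberg inequality, and this is precisely what (\ref{sm}) restores on $M$; everything else is form-theoretic and hence coordinate-free.
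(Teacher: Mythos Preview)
Your proposal is correct and follows exactly the approach the paper intends: the paper's own ``proof'' consists of the single sentence ``Arguing as in the Euclidean case we obtain the following theorem'', i.e.\ it directs the reader to rerun Proposition~\ref{22}, Proposition~\ref{p2}, and Theorem~\ref{ehd} on $M$, which is precisely what you outline. Your identification of the one nontrivial adaptation---that the Gagliardo--Nirenberg inequality in Proposition~\ref{p2} is recovered from the Sobolev inequality~(\ref{sm}) via H\"older interpolation between $L^p$ and $L^{2N/(N-2)}$---is the key observation, and the rest (Davies perturbation with $\phi=\rho(\cdot,E)$, form-theoretic sectoriality, Riesz--Thorin) is indeed geometry-independent.
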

We invest these off-diagonal estimates as in the proof of Theorem \ref{rt} to obtain the following result

\begin{teo}
Let $M$ be a non-compact complete Riemannian manifold of dimension $N\ge3$. Assume (\ref{ssc}), (\ref{sm}) and (\ref{ht}). 
Then $V^{1/2}A^{-1/2}$ and $ \na A^{-1/2}$ are bounded on $L^p(M)$  for all $p\in (p'_0;2]$ where  $p'_0=\Big(\frac{2N}{(N-2)\big(1-\sqrt{1-\frac{1}{1+\e}}\big)}\Big)'.$
\end{teo}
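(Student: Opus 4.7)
The plan is to carry over the proof of Theorem \ref{rt} and Proposition \ref{v} to the manifold setting, using the $L^p(M)-L^2(M)$ off-diagonal estimates just established in the preceding theorem. The doubling hypothesis (\ref{ht}) makes $(M,\rho,\mu)$ a space of homogeneous type, so the Blunck-Kunstmann criterion (Theorem \ref{bk}) and its proof extend to $M$ with geodesic balls $B(x,r)$ replacing Euclidean balls and with the summability condition $\sum_j g(j)\,\mu(2^{j+1}B)/\mu(B)<\i$ in place of $\sum_j g(j)\,2^{jN}<\i$.

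First I would note that the $L^2(M)$-boundedness of $T\in\{\na A^{-1/2},\,V^{1/2}A^{-1/2}\}$ follows from the strong subcriticality (\ref{ssc}) by the same identities (\ref{l2}) and (\ref{lv2}), which only use the spectral inequality $A\ge\e V$ and $(1+\e)A\ge\e(-\D)$. It therefore suffices to prove weak type $(p,p)$ for each $p\in(p_0';2)$ and interpolate by Marcinkiewicz. Fix such a $p$, choose $m>N/4-\gamma_p$, and set $A_r:=I-(I-e^{-r^2A})^m$. For condition (\ref{2}), I would expand the binomial and apply assertion (i) of the preceding theorem to each $e^{-kr^2A}$; the factor $r^{-2\gamma_p}$ combines with $\mu(B)^{1/p}/\mu(2^{j+1}B)^{1/2}$ and doubling to yield the $2^{-jN/2}$-type decay, since the dimension $N$ enters only through the $t^{-N/2}$ in the off-diagonal estimate, which in turn comes from the Sobolev inequality (\ref{sm}) via the Gaussian bound (\ref{egm}).

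For condition (\ref{1}), I would write
\[ T(I-e^{-r^2A})^m f=C\int_0^\i g_{r^2}(t)\,\na e^{-tA}f\,dt \]
for $T=\na A^{-1/2}$, and analogously with $\sqrt{t}V^{1/2}e^{-tA}$ for $T=V^{1/2}A^{-1/2}$, using the same explicit kernel $g_{r^2}$ as in (\ref{iiii}). I would split the $t$-integral into the intervals $(0,r^2]$, $(kr^2,(k+1)r^2]$ for $1\le k\le m$, and $((m+1)r^2,\i)$, and insert the pointwise bounds on $g_{r^2}$ together with assertions (ii) and (iii) of the preceding theorem. The bookkeeping reproduces the estimates $J_1,J_2,I_2$ from the proof of Theorem \ref{rt} and produces the decay $2^{-2j(m+\gamma_p+N/4)}$ needed for summability against $\mu(2^{j+1}B)/\mu(B)$.

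The main obstacle is strictly bookkeeping: one must verify that on $M$ the volume factors $\mu(B)$ and $\mu(2^{j+1}B)$ substitute for the Euclidean $r^N$ and $(2^{j+1}r)^N$ in the appropriate way. This is guaranteed because $N$ enters the off-diagonal bounds only through the Gaussian kernel exponent $t^{-N/2}$ (which by \cite{var},\cite{dav} is equivalent to (\ref{sm})), while the comparisons between $\mu(B)$ and $\mu(2^{j+1}B)$ are exactly what doubling provides. Once this is in place, Marcinkiewicz interpolation between the weak $(p,p)$ bound and the strong $(2,2)$ bound completes the proof exactly as in Theorem \ref{rt} and Proposition \ref{v}.
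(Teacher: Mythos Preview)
Your proposal is correct and takes essentially the same approach as the paper, whose entire proof is the one-line instruction to ``invest these off-diagonal estimates as in the proof of Theorem~\ref{rt}.'' You have faithfully unpacked that instruction: the $L^2$-boundedness from (\ref{l2}) and (\ref{lv2}), the choice $A_r=I-(I-e^{-r^2A})^m$, the verification of (\ref{1}) and (\ref{2}) via the three off-diagonal bounds of the preceding theorem and the same kernel $g_{r^2}$, and Marcinkiewicz interpolation --- while correctly noting that the doubling hypothesis~(\ref{ht}) is what carries the Blunck--Kunstmann criterion (Theorem~\ref{bk}, in its homogeneous-type form from \cite{BK},\cite{bk3}) over to $M$.
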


  We say that the potential $V$ is in the class $K_\i(M)$, if for any $\e>0$ there exists a compact set $K\subset M$ and $\delta>0$ such that
\[\sup_{x\in M} \int_{K^c}G(x,y)|V(y)|d\mu(y)\le \e\]
 where $K^c:= M \smallsetminus K$, and for all measurable sets $B\subset K$ with $\mu(b)<\delta$,
\[\sup_{x\in M} \int_{B}G(x,y)|V(y)|d\mu(y)\le \e.\]
Here $G(x,y):=\int_{0}^\i p(t,x,y)dt$ is the Green function, and $p(t,x,y)$ is the heat kernel of the Laplace-Beltrami operator.
This class is the generalization of $K_N^\i$ to manifolds (see \cite{ch} Section 2).\\

Since (\ref{ht}) and (\ref{p}) imply the Li-Yau estimate (\ref{ly}), we can use Theorem 2 of \cite{t} and obtain a Gaussian upper bound of the heat kernel of $-\D-V$. Thus arguing as in the Euclidean case, we obtain the following result 

\begin{teo}
Let $M$ be a non-compact complete Riemannian manifold of dimension $N\ge3$, and let $A$ be the Schr\"{o}dinger operator  $-\Delta-V, 0\le V\in L^{N/2}(M)\cap K_\i$. Assume that for all ball $B$, $\mu(B(x,r))\ge Cr^N$.
Assume (\ref{ssc}), (\ref{ht}) and (\ref{p}). Then $\D (-\D-V)^{-1}$ and $V(-\D-V)^{-1}$ are bounded on $L^p(M)$ for all $p\in (1;N/2)$.
\end{teo}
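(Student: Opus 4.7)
The plan is to transpose, to the manifold setting, the short argument that in the proof of Theorem~\ref{e.g} produced the bound $\N(-\D)A^{-1}u\N_{p_1}\le C\N u\N_{p_1}$ for $p_1\in(1;N/2)$; the three ingredients used there---a Gaussian upper bound for $e^{-tA}$ with flat scaling, the standard heat-kernel argument for $L^s$--$L^q$ mapping of $A^{-1}$, and H\"older's inequality together with $V\in L^{N/2}$---are all available on $M$ under the present hypotheses.

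First, assumptions (\ref{ht}) and (\ref{p}) yield, via Saloff-Coste, the two-sided Li--Yau estimate (\ref{ly}) for the heat kernel of $-\D$. Since $V\ge 0$ lies in $K_\i(M)$, Takeda's perturbation theorem (Theorem~2 of \cite{t}, invoked exactly as in the paragraph immediately preceding the statement) transfers a Gaussian upper bound to the kernel $p^A(t,x,y)$ of $e^{-tA}$. The volume lower bound $\mu(B(x,r))\ge Cr^N$ then upgrades this to
\[p^A(t,x,y)\le C\,t^{-N/2}\,e^{-c\rho^2(x,y)/t}\quad \forall\,x,y\in M,\ t>0,\]
so in particular $\N e^{-tA}f\N_\i\le C\,t^{-N/2}\N f\N_1$ for all $t>0$. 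The argument of Coulhon \cite{c} then forces $A^{-1}$ to map $L^{p_1}(M)$ boundedly into $L^q(M)$ whenever $p_1<N/2$ and $\frac{1}{p_1}-\frac{1}{q}=\frac{2}{N}$.

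With this in hand, H\"older's inequality combined with $V\in L^{N/2}(M)$ yields
\[\N VA^{-1}u\N_{p_1}\le \N V\N_{N/2}\,\N A^{-1}u\N_q\le C\,\N u\N_{p_1}\]
for every $p_1\in(1;N/2)$, which is the asserted boundedness of $VA^{-1}$ on $L^{p_1}(M)$. The identity $-\D A^{-1}=AA^{-1}+VA^{-1}=I+VA^{-1}$, valid because $A=-\D-V$, then immediately delivers the boundedness of $\D A^{-1}$ on the same spaces. The only point deserving scrutiny is the application of Takeda's result on $M$: once (\ref{ly}) is in force and $V$ lies in $K_\i(M)$---which, following Chen \cite{ch}, is the correct manifold analogue of $K_N^\i$---the Feynman--Kac / gauge estimates used in \cite{t} are insensitive to the ambient geometry, so the Gaussian upper bound for $p^A$ follows with no modification, and the remaining steps parallel the Euclidean computation verbatim.
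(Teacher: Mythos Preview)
Your argument is correct and coincides with the paper's own: the paper simply says that since (\ref{ht}) and (\ref{p}) yield the Li--Yau estimate (\ref{ly}), Takeda's Theorem~2 of \cite{t} gives a Gaussian upper bound for the heat kernel of $-\D-V$, and then one argues as in the Euclidean case (i.e., exactly the H\"older-plus-Coulhon computation (\ref{h})--(\ref{hol}) from the proof of Theorem~\ref{e.g}). Your use of the volume lower bound $\mu(B(x,r))\ge Cr^N$ to convert the Li--Yau-type Gaussian bound into the flat $t^{-N/2}$ scaling needed for Coulhon's $L^{p_1}\to L^q$ mapping of $A^{-1}$ is precisely the step that this extra hypothesis is there to supply.
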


Now using Theorem 2 of \cite{t} and Theorem 5 of \cite{sik}, then arguing as in the Euclidean case we obtain the following

\begin{teo}\label{ourt}
Let $M$ be a non-compact complete Riemannian manifold of dimension $N\ge3$, and let $A$ be the Schr\"{o}dinger operator  $-\Delta-V, 0\le V\in K_\i$. 
Assume (\ref{ssc}), (\ref{ht}) and (\ref{p}). 
 Then  $\na A^{-1/2}$ and $V^{1/2}A^{-1/2}$ are of weak type $(1,1)$, thus they are bounded on $L^p(M)$ for all $p\in(1;2]$.\\
 If in addition we assume that for all ball $B \ \ \mu(B(x,r))\ge Cr^N$, and for some $r\in(2;N]$, the Riesz transforms $\na (-\D)^{-1/2}$ are bounded on $L^r(M)$ 
 then $\na A^{-1/2}$ and $V^{1/2}A^{-1/2}$ are bounded on $L^p(M)$ for all $p\in(1;r)$ provided that $V\in L^{N/2}(M)$.
\end{teo}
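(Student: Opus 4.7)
The plan is to follow almost exactly the Euclidean scheme used for Theorem \ref{e.g}, with the Euclidean heat kernel bounds replaced by their manifold counterparts. First I would activate Takeda's Theorem 2 in \cite{t}: assumptions (\ref{ht}) and (\ref{p}) give the two-sided Li--Yau estimate (\ref{ly}) for the heat kernel of $-\Delta$, and the Kato-class condition $V\in K_\infty(M)$ together with the strong subcriticality (\ref{ssc}) then yields a Gaussian upper bound for the heat kernel of $A=-\Delta-V$. From that Gaussian bound I would derive $L^1(M)-L^2(M)$ off-diagonal estimates for $(e^{-tA})_{t>0}$, $(\sqrt{t}\,\nabla e^{-tA})_{t>0}$ and $(\sqrt{t}\,V^{1/2}e^{-tA})_{t>0}$, using the same perturbation/Gagliardo--Nirenberg argument as in Propositions \ref{22} and \ref{p2}, only now the Euclidean distance is replaced by the geodesic distance $\rho$.

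With these $L^1-L^2$ off-diagonal estimates in hand, I would apply the Blunck--Kunstmann criterion (Theorem \ref{bk}, or alternatively Theorem 5 of \cite{sik}, both of which transfer to spaces of homogeneous type thanks to (\ref{ht})) exactly as in the proof of Theorem \ref{rt} and Proposition \ref{v}, to conclude that $\nabla A^{-1/2}$ and $V^{1/2}A^{-1/2}$ are of weak type $(1,1)$. Combined with the $L^2(M)$-boundedness already guaranteed by (\ref{ssc}) via (\ref{l2}) and (\ref{lv2}), Marcinkiewicz interpolation yields boundedness on $L^p(M)$ for all $p\in(1,2]$.

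For the extended range $p\in(2,r)$, I would imitate the Stein complex interpolation argument used in the proof of Theorem \ref{e.g}. The lower volume bound $\mu(B(x,r))\ge Cr^N$ combined with the Gaussian upper bound forces $\|e^{-tA}\|_{1-\infty}\le Ct^{-N/2}$, hence $A^{-1}$ maps $L^{p_1}(M)$ into $L^{q}(M)$ with $\frac{1}{p_1}-\frac{1}{q}=\frac{2}{N}$. Using $V\in L^{N/2}(M)$ and H\"older I then get $\|VA^{-1}\|_{p_1-p_1}\le C$, and the identity $(-\Delta)A^{-1}=I+VA^{-1}$ gives $\|(-\Delta)A^{-1}\|_{p_1-p_1}\le C$ for all $p_1\in(1,N/2)$. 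The Gaussian bound also furnishes a bounded $H^\infty$-calculus for $A$ on every $L^p(M)$, so $\|A^{iy}\|_{p-p}\le C_\gamma e^{\gamma|y|}$; the same holds for $-\Delta$ (again by (\ref{ly})). Applied to the admissible family $F(z)=\langle (-\Delta)^{z}A^{-z}f,g\rangle$ in the strip $\{0\le\Re z\le 1\}$, with endpoint estimates at $\Re z=0$ from the $H^\infty$-calculus and at $\Re z=1$ from the bound on $(-\Delta)A^{-1}$, Stein interpolation at $z=1/2$ yields boundedness of $(-\Delta)^{1/2}A^{-1/2}$ on $L^p(M)$ for $p\in(1,N)$. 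Composing with the hypothesis that $\nabla(-\Delta)^{-1/2}$ is bounded on $L^r(M)$ (and hence by interpolation on every $L^p(M)$, $p\in(1,r)$) produces the boundedness of $\nabla A^{-1/2}$ on $L^p(M)$ for $p\in(1,r)$.

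For $V^{1/2}A^{-1/2}$ I would use the factorisation $V^{1/2}A^{-1/2}=V^{1/2}(-\Delta)^{-1/2}\cdot(-\Delta)^{1/2}A^{-1/2}$. The first factor is controlled by H\"older (using $V\in L^{N/2}(M)$) followed by the Sobolev inequality (\ref{sm})—valid on $M$ under $\mu(B(x,r))\ge Cr^N$ and Li--Yau—and the boundedness of Riesz transforms for $-\Delta$, giving the manifold analogue of (\ref{ss}); the second factor was just shown to be bounded on $L^p(M)$ for $p\in(1,r)$. The main obstacle, as in the Euclidean case, is the third paragraph: one must ensure that the density argument for $R(A)\cap L^1\cap L^\infty$ and the $L^2$-estimate on the imaginary powers $A^{-iy}$ (needed for admissibility of $F(z)$) both transfer to the manifold setting. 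The former follows from estimate (\ref{k}) which itself is a consequence of the Gaussian upper bound on $e^{-tA}$; the latter is just self-adjointness of $A$ on $L^2(M)$. Once these details are checked, the rest of the Euclidean proof ports over essentially unchanged.
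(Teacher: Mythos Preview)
Your proposal is correct and follows essentially the same route as the paper, which simply says to invoke Takeda's Gaussian bound (via the Li--Yau estimate coming from (\ref{ht}) and (\ref{p})) together with Sikora's Theorem~5 for the weak type $(1,1)$ part, and then to repeat the Stein complex interpolation argument of Theorem~\ref{e.g} for the range $p\in(2,r)$. The only small imprecision is your claim that boundedness of $\nabla(-\Delta)^{-1/2}$ on $L^r(M)$ alone gives boundedness on all $L^p(M)$, $p\in(1,r)$, by interpolation: interpolation with $L^2$ only covers $p\in[2,r]$, but since the range $p\in(1,2]$ is already handled by the first part of the theorem this causes no trouble.
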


\textit{Remark} Let $M$ be a non-compact complete Riemannian manifold of dimension $N\ge3$. Let $H=-\D+V$ be a Schr\"{o}dinger operator with non-negative potential $V\in L^{N/2}(M)$. 
 Assume  that for some $r\in(2;N]$, the Riesz transforms $\na (-\D)^{-1/2}$ are bounded on $L^p(M)$ for all $p\in (2;r)$ or for $p=r$. Assume also  (\ref{sm}). Then the heat kernel associated to $H$ satisfies (\ref{egm}). 
 Hence we obtain by the previous argument the 
  $L^p$-boundedness of $V^{1/2}H^{-1/2}$ and $\na H^{-1/2}$ for all $p\in(1;r)$.\\

Note that (\ref{ht}) and (\ref{p}) hold on manifolds with non-negative Ricci curvature (see \cite{LY}) as well as the boundedness on $L^p(M)$ for all $p\in (1,\i)$ of Riesz transforms associated to the Laplace-Beltrami operator (see \cite{b}). The Sobolev inequality  (\ref{sm}) is valid on manifolds with Ricci curvature bounded from below satisfying \[\inf_{x\in M}\mu(B(x,1))>0\] (see \cite{h} Theorem 3.14). Therefore manifolds with non-negative Ricci curvature satisfying $\inf_{x\in M}\mu(B(x,1))>0$ are a class of manifolds where Theorem \ref{ourt} holds.

We mention that  Carron, Coulhon and Hassell \cite{cch} proved that the Riesz transforms $\na (-\D)^{-1/2}$ are bounded on $L^p(M)$ for all $p\in (2;N)$ on smooth complete Riemannian manifolds of dimension $N\ge3$ which are the union of a compact part and a finite number of Euclidean ends. Ji, Kunstmann and Weber \cite{jkw} proved that this boundedness holds for all $p\in(1;\i)$, on the complete connected Riemannian manifolds whose Ricci curvature is bounded from below, if there is a constant $a>0$ with $\sigma(-\D)\subset\{0\}\cup[a;\i)$. They also give examples of manifolds that satisfy their conditions.  Auscher, Coulhon, Duong and Hofmann \cite{acdh} proved that on complete non-compact Riemannian manifolds satisfying assumption (\ref{ly}), the uniform boundedness of $(\sqrt{t}\na e^{-t(-\D)})_{t>0}$ on $L^q$ for some $q\in(2;\i]$ implies the boundedness on $L^p(M)$ of $\na (-\D)^{-1/2}$ for all $p\in(2;q)$. And we have equivalence if $(\sqrt{t}\na e^{-t(-\D)})_{t>0}$ is u
 niformly bounded on $L^r$ for all $r\in(2;q)$.\\

Therefore we deduce the following propositions using our previous theorem and the criterion of \cite{acdh}. We also use the fact that the semigroup $(e^{-t(-\D-V)})_{t>0}$ is bounded analytic on $L^p(M)$ for all $p\in(1;\i)$. This is true on manifolds where assumptions (\ref{ht}) and (\ref{p}) hold and when $V\in K_\i$ satisfying (\ref{ssc}) (see e.g. \cite{bk2} Theorem 1.1).

\begin{prop}
Let $M$ be a non-compact complete Riemannian manifold of dimension $N\ge3$. Assume that for all ball $B \ \ \mu(B(x,r))\ge Cr^N$, assume (\ref{ssc}), (\ref{ht}) and (\ref{p}), and assume that $V\in K_\i\cap L^{N/2}(M)$. If for some $r\in(2;N]$
\[\N |\na e^{-t(-\D)}|\N_{L^r(M)-L^r(M)}\le C/\sqrt{t}\] for all $t>0,$ 
then \[\N |\na e^{-t(-\D-V)}|\N_{L^p(M)-L^p(M)}\le C/\sqrt{t}\] for all $t>0$, all $p\in(1,r)$.
\end{prop}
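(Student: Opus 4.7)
The plan is to combine the Auscher--Coulhon--Duong--Hofmann criterion for the Laplace--Beltrami operator with Theorem \ref{ourt} for the Schr\"odinger operator $A=-\D-V$, and then to pass from $L^p$-boundedness of the Riesz transform $\na A^{-1/2}$ to uniform $L^p$-boundedness of $(\sqrt{t}\,\na e^{-tA})_{t>0}$ by means of analyticity of the semigroup.

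First I would use that hypotheses (\ref{ht}) and (\ref{p}) imply the Li--Yau estimate (\ref{ly}) for the heat kernel of $-\D$, so the criterion of \cite{acdh} recalled just before the statement is available. Combined with the assumed uniform bound $\N |\na e^{-t(-\D)}|\N_{L^r(M)-L^r(M)}\le C/\sqrt{t}$, this criterion gives boundedness of $\na(-\D)^{-1/2}$ on $L^s(M)$ for every $s\in(2,r)$. Fix such an $s$ close to $r$; since (\ref{ssc}), (\ref{ht}), (\ref{p}), the volume lower bound $\mu(B(x,r))\ge Cr^N$, and $V\in K_\i\cap L^{N/2}(M)$ all hold, Theorem \ref{ourt} then yields $\N \na A^{-1/2}\N_{L^p-L^p}\le C$ for every $p\in(1,s)$. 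Letting $s\uparrow r$, I conclude that $\na A^{-1/2}$ is bounded on $L^p(M)$ for every $p\in(1,r)$.

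To deduce the claimed semigroup estimate I would use, as in the proof of Proposition \ref{p2}, the factorization
\[\sqrt{t}\,\na e^{-tA}=\na A^{-1/2}\cdot \sqrt{t}\,A^{1/2}e^{-tA/2}\cdot e^{-tA/2},\]
valid on a dense subspace and extending by continuity. Under the present assumptions, the remark recalled just before the statement guarantees that $(e^{-tA})_{t>0}$ is bounded analytic on $L^p(M)$ for every $p\in(1,\i)$; in particular $\N e^{-tA/2}\N_{L^p-L^p}\le C$ and $\N Ae^{-tA/2}\N_{L^p-L^p}\le C/t$. The standard moment inequality $\N A^{1/2}u\N_p\le C\N u\N_p^{1/2}\N Au\N_p^{1/2}$ applied to $u=e^{-tA/2}f$ then gives $\N \sqrt{t}\,A^{1/2}e^{-tA/2}\N_{L^p-L^p}\le C$ uniformly in $t>0$. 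Chaining these three $L^p$-bounded maps with the $L^p$-boundedness of $\na A^{-1/2}$ obtained in the previous step yields $\N |\na e^{-tA}|\N_{L^p-L^p}\le C/\sqrt{t}$ for every $t>0$ and every $p\in(1,r)$.

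The only nontrivial point in this scheme is the $L^p$-bound $\N \sqrt{t}\,A^{1/2}e^{-tA/2}\N_{L^p-L^p}\le C$ for $p\neq 2$, which relies on the moment inequality for sectorial operators together with bounded analyticity of the semigroup on $L^p$; both are standard facts once bounded analyticity on $L^p$ is known, and the latter is precisely the content of the remark invoked just above. Everything else is a bookkeeping combination of the ACDH criterion, Theorem \ref{ourt}, and analytic-semigroup calculus, and no new off-diagonal or interpolation argument beyond those already established in the paper is required.
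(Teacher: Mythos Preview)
Your proposal is correct and follows exactly the approach sketched in the paper: the paper's own ``proof'' is the paragraph immediately preceding the proposition, which simply says the result follows from the criterion of \cite{acdh}, Theorem \ref{ourt}, and bounded analyticity of $(e^{-tA})_{t>0}$ on $L^p(M)$, and you have faithfully unpacked these three ingredients in the right order. The factorization $\sqrt{t}\,\na e^{-tA}=\na A^{-1/2}\cdot \sqrt{t}A^{1/2}e^{-tA/2}\cdot e^{-tA/2}$ and the use of bounded analyticity to control the middle factor are precisely what the paper has in mind (compare Proposition \ref{p2}).
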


\begin{prop}
Let $M$ be a non-compact complete Riemannian manifold of dimension $N\ge3$. Assume (\ref{sm}) 
 and assume that $V\in L^{N/2}(M)$. If  for some $r\in (2;N]$
\[\N |\na e^{-t(-\D)}|\N_{L^r(M)-L^r(M)}\le C/\sqrt{t}\]
for all $t>0$, 
then \[\N |\na e^{-t(-\D+V)} |\N_{L^p(M)-L^p(M)}\le C/\sqrt{t}\]
for all $t>0$, all $p\in(1,r)$.
\end{prop}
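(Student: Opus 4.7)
The plan is to reduce the desired gradient bound on the heat semigroup of $H:=-\D+V$ to the boundedness of the associated Riesz transform $\na H^{-1/2}$, by means of the factorisation
\[
\sqrt{t}\,\na e^{-tH} \;=\; (\na H^{-1/2})\circ(\sqrt{t}\,H^{1/2}e^{-tH}),
\]
and then to invoke the remark immediately preceding this proposition. Since $V\ge 0$, the Sobolev inequality (\ref{sm}) is equivalent to the Gaussian upper bound (\ref{egm}) for the heat kernel of $-\D$ (cf.\ \cite{var}, \cite{dav}); by semigroup domination $0\le e^{-tH}\le e^{-t(-\D)}$, the heat kernel of $H$ satisfies the same upper bound. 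In particular $(e^{-tH})_{t>0}$ is uniformly bounded and analytic on $L^p(M)$ for every $p\in(1;\i)$, so the family $\sqrt{t}\,H^{1/2}e^{-tH}$ is uniformly bounded on $L^p(M)$ for each such $p$.

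Next, I would apply the Auscher--Coulhon--Duong--Hofmann criterion \cite{acdh}: from the hypothesis $\N|\na e^{-t(-\D)}|\N_{L^r(M)-L^r(M)}\le C/\sqrt{t}$ one deduces the $L^p(M)$-boundedness of $\na(-\D)^{-1/2}$ for every $p\in(2;r)$. Combined with (\ref{sm}) and $V\in L^{N/2}(M)$, the preceding remark then delivers the $L^p(M)$-boundedness of $\na H^{-1/2}$ for all $p\in(1;r)$. Composing this with the uniform $L^p$-bound on $\sqrt{t}\,H^{1/2}e^{-tH}$ from the first paragraph produces the claimed estimate on $\sqrt{t}\,\na e^{-tH}$ in $L^p(M)$ for all $p\in(1;r)$, completing the argument.

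The main obstacle is the appeal to \cite{acdh}: as quoted in the excerpt, the equivalence between gradient bounds on the heat semigroup and $L^p$-bounds on the Riesz transform is formulated under the Li--Yau estimate (\ref{ly}), which is strictly stronger than our hypothesis (\ref{sm}). The direction actually needed here — passing from a heat-semigroup gradient bound to a Riesz-transform bound — relies, however, only on the Gaussian upper bound for the heat kernel and Calder\'on--Zygmund-type machinery, both of which are available once (\ref{sm}) holds. Should one wish to bypass this implicit appeal, one may instead mimic the proof of Theorem \ref{rt} with $A$ replaced by $-\D$: interpolate the $L^r-L^r$ hypothesis on $\sqrt{t}\,\na e^{-t(-\D)}$ with the $L^2-L^2$ off-diagonal estimate (valid under the Gaussian bound) to obtain the relevant $L^p-L^2$ off-diagonal bounds, verify the Blunck--Kunstmann hypotheses (\ref{1})--(\ref{2}) for $T=\na(-\D)^{-1/2}$, and conclude by Marcinkiewicz interpolation that $\na(-\D)^{-1/2}$ is bounded on $L^p(M)$ for every $p\in(2;r)$.
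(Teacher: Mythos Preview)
Your main argument is exactly the paper's: it deduces both propositions ``using our previous theorem and the criterion of \cite{acdh}'' together with bounded analyticity of the semigroup on $L^p$, i.e.\ the factorisation $\sqrt{t}\,\na e^{-tH}=(\na H^{-1/2})(\sqrt{t}\,H^{1/2}e^{-tH})$ combined with the Remark preceding the proposition. Your observation that \cite{acdh} is formulated under the Li--Yau estimate (\ref{ly}) rather than merely (\ref{sm}) is a fair point, and the paper is no more explicit about it than you are.

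Your proposed workaround, however, does not do what you claim. Theorem~\ref{bk} is stated for $p\in[1,2)$ and produces weak-type $(p,p)$ bounds \emph{below} $2$; it cannot yield $L^p$-boundedness of $\na(-\D)^{-1/2}$ for $p\in(2,r)$. Moreover, interpolating an $L^r\!-\!L^r$ bound with an $L^2\!-\!L^2$ off-diagonal estimate gives $L^p\!-\!L^p$ off-diagonal estimates for $p$ between $2$ and $r$, not the $L^p\!-\!L^2$ estimates you assert. The correct replacement is the $p>2$ criterion of \cite{acdh} (or its abstract form in \cite{a}), which uses $L^2\!-\!L^p$ off-diagonal bounds on $\sqrt{t}\,\na e^{-t(-\D)}$ together with a good-$\lambda$ / reverse-H\"older argument; but that machinery again relies on the doubling property (\ref{ht}), so it does not sidestep the hypothesis issue you raised. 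In short, your core proof is sound and matches the paper, but the alternative route you sketch would need the $p>2$ technology rather than Theorem~\ref{bk}.
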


\begin{center}
\textbf{Acknowledgments}\end{center}
I would like to thank my Ph.D advisor E.M. Ouhabaz for his advice and C. Spina for  very helpful suggestions. My thanks also to P. Auscher who mentioned an error in an earlier version.

\end{document}